\documentclass[12pt,english]{article}
\usepackage[T1]{fontenc}
\usepackage[latin1]{inputenc}
\pagestyle{plain}
\usepackage{textcomp}
\usepackage{amssymb}

\makeatletter


\hoffset = 0in 
\textwidth = 6in
\oddsidemargin = 0.25in
\evensidemargin = 0.25in
\voffset = 0.25in
\headheight = 0in
\topmargin = 0in
\headsep = 0in
\textheight = 8.5in

\makeatletter\@addtoreset {equation}{section}\makeatother

\newtheorem{theorem}{Theorem}
\newtheorem{lemma}{Lemma}
\newtheorem{remark}{Remark}

\newtheorem{corollary}{Corollary}

\newenvironment{proof}{
    \noindent {\it Proof.}}{\hfill$\Box$
}
\newenvironment{proof1}{
    \noindent {\it Proof }}{\hfill$\Box$
}

\usepackage{babel}
\makeatother

\begin{document}

\title{\textbf{Global well-posedness of the short-pulse and sine--Gordon
equations in energy space}}

\author{Dmitry Pelinovsky and Anton Sakovich \date{}\\
 {\small Department of Mathematics, McMaster University, Hamilton,
Ontario, Canada, L8S 4K1}}

\maketitle
\begin{abstract}
We prove global well-posedness of the short-pulse equation with small
initial data in Sobolev space $H^{2}$. Our analysis relies on local
well-posedness results of Schäfer \& Wayne \cite{SW04}, the correspondence
of the short-pulse equation to the sine--Gordon equation in characteristic
coordinates, and a number of conserved quantities of the short-pulse
equation. We also prove local and global well-posedness of the sine--Gordon
equation in an appropriate function space. 
\end{abstract}

\section{Introduction}

Short-pulse approximations of nonlinear wave packets in dispersive
media were considered recently with various analytical techniques
(see, e.g., \cite{CL08} for a review of results). The previously
known model for small-amplitude quasi-harmonic pulses, the nonlinear
Schrödinger equation, is replaced in the short-pulse approximation
by a new set of nonlinear evolution equations. Among these models,
we consider the model derived by Schäfer \& Wayne \cite{SW04} for
short pulses in nonlinear Maxwell's equations. Chung {\em et al.}
\cite{CJSW05} justified derivation of this model in the linear case
and presented numerical approximations of modulated pulse solutions.
This model dubbed as {\em the short-pulse equation} can be conveniently
expressed in the normalized form by \begin{equation}
u_{xt}=u+\frac{1}{6}\left(u^{3}\right)_{xx},\label{short-pulse}\end{equation}
 where $u(x,t):\mathbb{R}\times\mathbb{R}_{+}\mapsto\mathbb{R}$ and
the subscript denotes partial derivatives. In addition to the derivation
of the short-pulse equation, the pioneer paper \cite{SW04} contains
two mathematical results. First, non-existence of a smooth traveling
wave solution was proved in the entire range of the speed parameter.
Second, local well-posedness was proven in space $H^{s}$ for an integer
$s\geq2$, where $H^{s}$ denotes the standard Sobolev space with
the norm \[
\|f\|_{H^{s}}=\left(\int_{\mathbb{R}}\left(1+|k|^{2}\right)^{s}|\hat{f}(k)|^{2}dk\right)^{1/2}\]
 and $\hat{f}(k):=\frac{1}{\sqrt{2\pi}}\int_{\mathbb{R}}f(x)e^{-ikx}dx$
is the Fourier transform of $f(x)$ on a real axis.

The first result was recently extended by Costanzino, Manukian, \&
Jones \cite{CMJ08}, who proved existence of smooth traveling solutions
in the regularized short-pulse equation, \begin{equation}
u_{xt}=u+\frac{1}{6}\left(u^{3}\right)_{xx}+\beta u_{xxxx},\label{Ostrovsky-cubic}\end{equation}
 for a small positive $\beta>0$. They have also derived the regularized
short-pulse equation (\ref{Ostrovsky-cubic}) in the context of the
nonlinear Maxwell equation with a high-frequency dispersion. To construct
homoclinic solutions with slow and fast motions, the authors of \cite{CMJ08}
applied the Fenichel theory for singularly perturbed differential
equations.

In this paper, we are extending the second result of \cite{SW04},
namely we prove global well-posedness of the short-pulse equation
(\ref{short-pulse}) in $H^{2}$. The problem of global well-posedness
has been studied in a number of recent publications \cite{VL04,LM06,GL07}
in the context of a similar Ostrovsky equation \begin{equation}
u_{xt}=u+\left(u^{2}\right)_{xx}+\beta u_{xxxx},\label{Ostrovsky-quadratic}\end{equation}
 which models small-amplitude long waves in a rotating fluid. Clearly,
the regularized short-pulse equation (\ref{Ostrovsky-cubic}) can
be considered as the Ostrovsky equation with a cubic nonlinearity,
so that many of the well-posedness results can be equally applied
to both (\ref{Ostrovsky-cubic}) and (\ref{Ostrovsky-quadratic}).
Varlamov \& Liu \cite{VL04} proved local well-posedness of (\ref{Ostrovsky-quadratic})
in space $H^{s}\cap\dot{H}^{-1}$ for $s>\frac{3}{2}$, where $\dot{H}^{-1}$
is defined by the norm \[
\|f\|_{\dot{H}^{-1}}=\left(\int_{\mathbb{R}}\frac{|\hat{f}(k)|^{2}}{k^{2}}dk\right)^{1/2}.\]
 If $f\in\dot{H}^{-1}$, then the constraint $\hat{f}(0)=0$ holds,
which is written in physical space as $\int_{\mathbb{R}}f(x)dx=0$.
Under the constraint above, we can define an operator $\partial_{x}^{-1}f$
in any of the equivalent forms \[
\partial_{x}^{-1}f:=\int_{-\infty}^{x}f(x')dx'=-\int_{x}^{\infty}f(x')dx'=\frac{1}{2}\left(\int_{-\infty}^{x}-\int_{x}^{\infty}\right)f(x')dx',\]
 so that $\|f\|_{\dot{H}^{-1}}=\|\partial_{x}^{-1}f\|_{L^{2}}$. The
space $H^{1}\cap\dot{H}^{-1}$ is the energy space of the Ostrovsky
equation (\ref{Ostrovsky-quadratic}), where the power $V(u)=\|u\|_{L^{2}}^{2}$
and the energy \[
E(u)=\int_{\mathbb{R}}\left(\beta(\partial_{x}u)^{2}+\frac{1}{2}\left(\partial_{x}^{-1}u\right)^{2}-\frac{1}{3}u^{3}\right)dx\]
 conserve in time $t$. Using conserved quantities and local existence
in $H^{1}\cap\dot{H}^{-1}$, Linares \& Milanes \cite{LM06} and Gui
\& Liu \cite{GL07} proved global well-posedness of the Ostrovsky
equation (\ref{Ostrovsky-quadratic}) in the energy space. However,
their proof is only valid for $\beta>0$ and it is not applicable
to the short-pulse equation (\ref{short-pulse}).

Using local well-posedness results of Schäfer \& Wayne \cite{SW04}
and conserved quantities of the short-pulse equation (\ref{short-pulse})
found by Brunelli \cite{Br05}, we shall prove our main theorem on
global well-posedness of the short-pulse equation.

\begin{theorem} \label{theorem-wellposedness} Assume that $u_{0}\in H^{2}$
and \[
\|u_{0}'\|_{L^{2}}^{2}+\|u_{0}''\|_{L^{2}}^{2}<1.\]
 Then the short-pulse equation (\ref{short-pulse}) admits a unique
solution $u(t)\in C(\mathbb{R}_{+},H^{2})$ satisfying $u(0)=u_{0}$.
\end{theorem}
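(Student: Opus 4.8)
The plan is to upgrade the local theory of Sch\"afer \& Wayne \cite{SW04} to a global statement by means of an a priori bound in $H^2$. Their result provides, for $u_0\in H^2$, a maximal existence time $T_{\max}\in(0,\infty]$, a unique solution $u\in C([0,T_{\max}),H^2)$, and the continuation criterion that $T_{\max}<\infty$ forces $\|u(t)\|_{H^2}\to\infty$ as $t\to T_{\max}$; uniqueness in $C(\mathbb{R}_+,H^2)$ is then inherited from the local statement. It therefore suffices to bound $\|u(t)\|_{H^2}$ on every finite interval. I would first record that $\|u\|_{L^2}$ is conserved: writing the equation as $u_t=\partial_x^{-1}u+\tfrac12 u^2u_x$ and pairing with $u$, one gets $\int u\,\partial_x^{-1}u\,dx=\tfrac12\int\partial_x[(\partial_x^{-1}u)^2]\,dx=0$ and $\tfrac12\int u^3u_x\,dx=\tfrac18\int(u^4)_x\,dx=0$, so $\|u(t)\|_{L^2}=\|u_0\|_{L^2}$. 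This reduces the task to controlling $\|u_x(t)\|_{L^2}$ and $\|u_{xx}(t)\|_{L^2}$.

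For these I would use the conserved quantities of Brunelli \cite{Br05} supplied by the sine--Gordon correspondence. A direct computation from the equation gives the pointwise law $\partial_t\sqrt{1+u_x^2}=\partial_x\bigl(\tfrac12 u^2\sqrt{1+u_x^2}\bigr)$, so the characteristic change of variable $dy=\sqrt{1+u_x^2}\,dx$ is exact; under it, together with $w=\arctan u_x$, the sine--Gordon invariants $\int(1-\cos w)\,dy$ and $\int w_y^2\,dy$ become
\[
E_1=\int_{\mathbb{R}}\left(\sqrt{1+u_x^2}-1\right)dx,\qquad
E_2=\int_{\mathbb{R}}\frac{u_{xx}^2}{(1+u_x^2)^{5/2}}\,dx ,
\]
both constant in $t$. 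The decisive feature is that $E_1,E_2$ are equivalent to $\|u_x\|_{L^2}^2,\|u_{xx}\|_{L^2}^2$ with constants depending only on $\|u_x\|_{L^\infty}$: one has $E_1\le\tfrac12\|u_x\|_{L^2}^2$ and $E_2\le\|u_{xx}\|_{L^2}^2$, while conversely $\|u_x\|_{L^2}^2\le\bigl(1+\sqrt{1+\|u_x\|_{L^\infty}^2}\bigr)E_1$ and $\|u_{xx}\|_{L^2}^2\le(1+\|u_x\|_{L^\infty}^2)^{5/2}E_2$. Thus, once $\|u_x\|_{L^\infty}$ is under control, conservation of $E_1,E_2$ closes the $H^2$ bound.

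The central point is to control $\|u_x\|_{L^\infty}$, which I would do by a continuity (bootstrap) argument based on the sharp inequality $\|u_x\|_{L^\infty}^2\le\|u_x\|_{L^2}\|u_{xx}\|_{L^2}\le\tfrac12(\|u_x\|_{L^2}^2+\|u_{xx}\|_{L^2}^2)$. Set $\mathcal N(t)=\|u_x(t)\|_{L^2}^2+\|u_{xx}(t)\|_{L^2}^2$, so $\mathcal N(0)<1$ by hypothesis and, as long as $\mathcal N(t)<1$, the inequality gives $\|u_x(t)\|_{L^\infty}^2<\tfrac12$, i.e. $1+u_x^2<\tfrac32$. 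On the maximal subinterval where $\mathcal N<1$ I would insert this into the lower bounds above and use $E_1(t)=E_1(0)\le\tfrac12\|u_0'\|_{L^2}^2$ and $E_2(t)=E_2(0)\le\|u_0''\|_{L^2}^2$ to bound $\mathcal N(t)$ by a fixed multiple of $\|u_0'\|_{L^2}^2+\|u_0''\|_{L^2}^2$. Arranging the bookkeeping so that the smallness hypothesis keeps this bound strictly below $1$ shows $\mathcal N(t)<1$ can never first be violated, hence persists for all $t$; then $\|u_x\|_{L^\infty}$, $\|u_x\|_{L^2}$ and $\|u_{xx}\|_{L^2}$ stay bounded, and with conservation of $\|u\|_{L^2}$ this bounds $\|u(t)\|_{H^2}$ globally, so the continuation criterion gives $T_{\max}=\infty$.

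The main obstacle is exactly this circularity: the conserved quantities control $\|u_x\|_{L^2},\|u_{xx}\|_{L^2}$ only through constants that degrade as $\|u_x\|_{L^\infty}$ grows, while $\|u_x\|_{L^\infty}$ is itself estimated from those $L^2$ norms. The continuity argument breaks the loop, but only if the data are small enough that the degraded constants still return a value of $\mathcal N$ below the threshold, and extracting the sharp constant in the hypothesis is the delicate part of the estimate. Equivalently, in the sine--Gordon picture this is the assertion that $\|w\|_{L^\infty}$ stays bounded away from $\pi/2$, so that the characteristic transformation remains a nondegenerate diffeomorphism and no gradient blow-up (wave breaking) occurs.
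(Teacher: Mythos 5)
Your overall strategy (local theory plus an a priori $H^2$ bound from the conserved quantities $H_{-1}=\int u^2dx$, $H_0=\int(\sqrt{1+u_x^2}-1)dx$, $H_1=\int u_{xx}^2(1+u_x^2)^{-5/2}dx$) is the right one, but the bootstrap as you set it up does not close under the stated hypothesis. On the interval where $\mathcal N(t)=\|u_x\|_{L^2}^2+\|u_{xx}\|_{L^2}^2<1$ you get $1+u_x^2<\tfrac32$, and feeding this into your reverse inequalities returns $\|u_{xx}\|_{L^2}^2\le(3/2)^{5/2}H_1\approx 2.76\,\|u_0''\|_{L^2}^2$; so the best you recover is $\mathcal N(t)\lesssim 2.76\,\mathcal N(0)$, which is not below the threshold $1$ when $\mathcal N(0)$ is close to $1$. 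Raising the bootstrap threshold to some $M>1$ makes things worse, since the loss factor $(1+M/2)^{5/2}$ always exceeds $M$. So your argument proves global existence only for data satisfying a strictly smaller smallness condition (roughly $\|u_0'\|_{L^2}^2+\|u_0''\|_{L^2}^2<(3/2)^{-5/2}$), not the theorem as stated. This is exactly the circularity you flag at the end, and it is not merely ``delicate bookkeeping'': with these estimates the loop genuinely does not close.

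The paper breaks the circularity by never estimating $\|u_x\|_{L^2}$ and $\|u_{xx}\|_{L^2}$ directly. Instead it introduces $\tilde q=u_x/\sqrt{1+u_x^2}$ (which is $\sin w$ in the sine--Gordon picture) and observes that the conserved quantities control $\tilde q$ with \emph{no} degrading constants: pointwise, $\tilde q^2\le 2\,\frac{u_x^2}{1+\sqrt{1+u_x^2}}$ and $\tilde q_x^2\le\sqrt{1+u_x^2}\,\tilde q_x^2$, so $\|\tilde q\|_{H^1}^2\le 2H_0+H_1<1$ unconditionally, for all $t$. Sobolev embedding then gives $\|\tilde q\|_{L^\infty}\le\tfrac1{\sqrt2}\|\tilde q\|_{H^1}<1$, and one inverts $u_x=\tilde q/\sqrt{1-\tilde q^2}$ by expanding in a power series with positive coefficients in the Banach algebra $H^1$ (whose algebra constant is $1$), which converges precisely because $\|\tilde q\|_{H^1}<1$. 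This yields the $T$-independent bound $\|u_x\|_{H^1}\le\|\tilde q\|_{H^1}(1-\|\tilde q\|_{H^1}^2)^{-1/2}$ with the sharp constant built into the hypothesis $2H_0+H_1<1$, and no continuity argument is needed. A secondary gap: you take the conservation of $H_0,H_1$ for granted, whereas the paper has to work to justify it (it needs $u\in C^1([0,T],H^2)$ and the decay $uu_{xx}\to0$ at spatial infinity, both obtained through the sine--Gordon correspondence of Theorem \ref{theorem-joint}).
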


Our main motivation of interest in global well-posedness of the short-pulse
equation (\ref{short-pulse}) begins in the recent discovery of exact
modulated pulse solutions, which were modeled numerically in \cite{CJSW05}.
The exact solutions were found by Sakovich \& Sakovich \cite{SS06},
who also showed in \cite{SS05,SS07} integrability of the short-pulse
equation (\ref{short-pulse}) and its equivalence to the sine--Gordon
equation in characteristic coordinates \begin{equation}
w_{yt}=\sin(w),\label{sine-Gordon}\end{equation}
 where $w(y,t):\mathbb{R}\times\mathbb{R}_{+}\mapsto\mathbb{R}$ is
a new function in a new independent variable $y$. According to the
results in \cite{SS07}, there exists a local transformation from
solutions of the sine--Gordon equation (\ref{sine-Gordon}) to solutions
of the short-pulse equation (\ref{short-pulse}) given by $u(x,t)=w_{t}(y(x,t),t)$,
where $y=y(x,t)$ is found by inverting the function $x=x(y,t)$ from
solutions of the system of partial differential equations \begin{equation}
\frac{\partial x}{\partial y}=\cos(w),\quad\frac{\partial x}{\partial t}=-\frac{1}{2}\left(w_{t}\right)^{2}.\label{transformation}\end{equation}
 The compatibility of system (\ref{transformation}) results in the
sine--Gordon equation (\ref{sine-Gordon}). The function $x=x(y,t)$
is invertible with the inverse $y=y(x,t)$ for a fixed $t\in\mathbb{R}$
if $w(\cdot,t)$ belongs to the space \begin{equation}
H_{c}^{s}=\left\{ w\in H^{s}:\quad\|w\|_{L^{\infty}}\leq w_{c}<\frac{\pi}{2}\right\} ,\quad s>\frac{1}{2}.\label{space-Hs-c}\end{equation}

To understand the long-term dynamics of perturbations near the exact
modulated pulse solutions (which are dubbed as {\em breathers}
in the context of the sine--Gordon equation), we need to prove local
and global existence of solutions of the sine--Gordon equation (\ref{sine-Gordon})
in $H_{c}^{s}$ for $s>\frac{1}{2}$. If the constraint in $H_{c}^{s}$
can be kept globally in time, these results would imply that a solution
of the short-pulse equation (\ref{short-pulse}) starting with small
data $u_{0}\in H^{2}$ will remain bounded in $H^{2}$ for all $t\in\mathbb{R}_{+}$.
Together with integrability of the short-pulse equation (\ref{short-pulse}),
these results may suggest orbital or asymptotic stability of the modulated
pulse solutions. The latter problem is, however, beyond the scope
of the present paper.

The sine-Gordon equation in characteristic coordinates was considered
long ago by Kaup \& Newell \cite{KN78} using formal applications
of the stationary phase method. Local well-posedness of this equation
is a non-trivial problem due to the presence of the constraint \begin{equation}
\int_{\mathbb{R}}\sin(w(y,t))dy=0,\qquad\forall t\in\mathbb{R}_{+},\label{constraint}\end{equation}
 which gives a necessary but not sufficient condition that the solution
$w(\cdot,t)$ stays in $H_{c}^{s}$ for all $t\in\mathbb{R}_{+}$.
Our treatment of this equation is rigorous and we shall prove that
the sine--Gordon equation (\ref{sine-Gordon}) is locally well-posed
in space $H^{s}\cap\dot{H}^{-1}$ for an integer $s\geq1$ in the
new variable $q=\sin(w)$. Global well-posedness is proved in $H^{1}\cap\dot{H}^{-1}$
with the help of three conserved quantities of the sine--Gordon equation
(\ref{sine-Gordon}). The result can be extended in $H^{s}\cap\dot{H}^{-1}$
for an integer $s\geq2$ if more conserved quantities are incorporated
into analysis.

The sine--Gordon equation in the laboratory coordinates \[
w_{\tau\tau}-w_{\xi\xi}=\sin(w)\]
 is known to be locally well-posed in a weaker space $L^{p}$ for
$p\geq2$, see Appendix B of Buckingham \& Miller \cite{BM08}. Similarly
to this work, our analysis is also based on the method of Picard iterations
to prove local well-posedness of the sine--Gordon equation in characteristic
coordinates (\ref{sine-Gordon}). These results provide an alternative
proof of the local well-posedness theorem for the short-pulse equation
(\ref{short-pulse}), thanks to the transformation (\ref{transformation}).
Our treatment of the problem is, however, simpler than the original
method of \cite{SW04}, where modified Picard iterations were constructed
using solutions of quasi-linear hyperbolic equations along the characteristics.
Moreover, additional properties of local solutions to the two equations
are identified via the transformation (\ref{transformation}) and
these properties are found to be useful in rigorous treatment of conserved
quantities for the two equations.

The paper is organized as follows. Section 2 deals with local well-posedness
of the sine--Gordon equation (\ref{sine-Gordon}). Correspondence
of local solutions is established in Section 3. Section 4 gives the
proof of Theorem \ref{theorem-wellposedness} on global well-posedness
of the short-pulse equation (\ref{short-pulse}). Global well-posedness
of the sine--Gordon equation (\ref{sine-Gordon}) is proven in Section
5.

\section{Local well-posedness of the sine--Gordon equation (\ref{sine-Gordon})}

We shall consider solutions $w(y,t)$ of the sine--Gordon equation
(\ref{sine-Gordon}) vanishing at infinity $|y|\to\infty$ for $t\geq0$.
Therefore, we eliminate kink solutions of the sine--Gordon equation,
which connect different equilibrium states between multiplies of $2\pi$
at different infinities. Our reasoning is that the kink solutions
lead to non-invertible functions $x=x(y,t)$ with respect to $y$
in the transformation (\ref{transformation}) and give multi-valued
loop solutions of the short-pulse equation (\ref{short-pulse}) (see
\cite{SS06} for details). Not only we are considering localized solutions
$w(\cdot,t)$ in space $H^{s}$ for $s>\frac{1}{2}$, we need to control
\[
\|w(\cdot,t)\|_{L^{\infty}}\leq w_{c}<\frac{\pi}{2},\]
 to ensure that the transformation (\ref{transformation}) is invertible
at least for $t\in[0,T]\subset\mathbb{R}_{+}$ for some $T>0$. Thus,
we need to prove that the sine--Gordon equation (\ref{sine-Gordon})
admits a local solution $w(\cdot,t)$ in space $C([0,T],H_{c}^{s})$
for some $T>0$, where $H_{c}^{s}$ is defined by (\ref{space-Hs-c}).

To incorporate the constraint (\ref{constraint}) for solutions of
the sine--Gordon equation (\ref{sine-Gordon}), we introduce a new
variable $q:=\sin(w)$, so that the constraint (\ref{constraint})
becomes a linear constraint \begin{equation}
\int_{\mathbb{R}}q(y,t)dy=0.\label{constraint-linear}\end{equation}
 The sine--Gordon equation (\ref{sine-Gordon}) transforms to the
evolution equation \begin{equation}
q_{t}=\sqrt{1-q^{2}}\partial_{y}^{-1}q,\label{time-evolution-q}\end{equation}
 where the operator $\partial_{y}^{-1}$ acts on an element of $H^{s}$
under the constraint (\ref{constraint-linear}), so that \[
\partial_{y}^{-1}q:=\int_{-\infty}^{y}q(y',t)dy'=-\int_{y}^{\infty}q(y',t)dy'=\frac{1}{2}\left(\int_{-\infty}^{y}-\int_{y}^{\infty}\right)q(y',t)dy'.\]
 Let us introduce the nonlinear function \begin{equation}
f(q):=1-\sqrt{1-q^{2}}=\frac{q^{2}}{1+\sqrt{1-q^{2}}}\label{nonlinear-function}\end{equation}
 and write the Cauchy problem for equation (\ref{time-evolution-q})
in the equivalent form \begin{equation}
\left\{ \begin{array}{l}
q_{t}=(1-f(q))\partial_{y}^{-1}q,\\
q|_{t=0}=q_{0}.\end{array}\right.\label{Cauchy}\end{equation}
 The nonlinear function $f(q)$ is squeezed by the quadratic functions
\[
\forall|q|\leq1:\quad\frac{q^{2}}{2}\leq f(q)\leq q^{2},\]
 which allows us to interpret the term $f(q)\partial_{y}^{-1}q$ as
a nonlinear perturbation to the linear evolution induced by $\partial_{y}^{-1}q$.

To analyze the Cauchy problem for the nonlinear time evolution (\ref{Cauchy}),
we obtain information on the fundamental solution of the underlying
linear problem \begin{equation}
\left\{ \begin{array}{l}
Q_{t}=\partial_{y}^{-1}Q,\\
Q|_{t=0}=Q_{0}.\end{array}\right.\label{Cauchy-linear}\end{equation}
 Let us denote $L=\partial_{y}^{-1}$ and $Q(t)=e^{tL}Q_{0}$. The
solution operator $e^{tL}$ is a norm-preserving map from $H^{s}$
to $H^{s}$ for any $s\geq0$ in the sense of \begin{equation}
\|Q(t)\|_{H^{s}}=\|e^{tL}Q_{0}\|_{H^{s}}=\|Q_{0}\|_{H^{s}},\quad\forall t\in\mathbb{R},\label{norm-preservation}\end{equation}
 which follows from the Fourier transform \[
{\cal F}(e^{tL})=e^{-\frac{it}{k}},\quad k\in\mathbb{R},\]
 involving a bounded oscillatory integral with a singular behavior
as $k\to0$. The solution operator $e^{tL}$ can be represented by
the convolution integrals involving Bessel's function $J_{0}$ of
the first kind.

\begin{lemma} \label{lemma-bounds} Let $K_{t}(y)$ and $J_{t}(y)$
be defined by \[
K_{t}(y):=\sqrt{\frac{t}{y}}J_{0}'(2\sqrt{ty}),\quad J_{t}(y):=J_{0}(2\sqrt{ty}),\quad\forall t,y\in\mathbb{R}_{+}.\]
 There exists $C>0$ such that for all $t\in\mathbb{R}_{+}$ \[
\|K_{t}\|_{L^{\infty}}\leq Ct,\quad\|K_{t}\|_{L^{2}}\leq Ct^{1/2},\quad\|J_{t}\|_{L^{\infty}}\leq1,\]
 whereas $K_{t}\notin L^{1}$, $J_{t}\notin L^{1}$, and $J_{t}\notin L^{2}$.
\end{lemma}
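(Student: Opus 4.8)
The plan is to reduce every statement to a $t$-independent estimate for the Bessel functions $J_0$ and $J_1=-J_0'$ by means of the single substitution $z=2\sqrt{ty}$, which isolates the dependence on $t$. Under this change of variable one has $y=z^2/(4t)$, $dy=\frac{z}{2t}\,dz$, and $\sqrt{t/y}=2t/z$, so that
\[
K_t(y)=\frac{2t}{z}J_0'(z)=-\frac{2t}{z}J_1(z),\qquad J_t(y)=J_0(z).
\]
I will use two standard ingredients: the identity $J_0'(z)=-J_1(z)$, and the classical large-argument asymptotics $J_\nu(z)=\sqrt{2/(\pi z)}\cos\!\left(z-\frac{\nu\pi}{2}-\frac{\pi}{4}\right)+O(z^{-3/2})$ as $z\to\infty$, together with the small-argument behaviour $J_0(z)=1+O(z^2)$ and $J_1(z)=\frac{z}{2}+O(z^3)$.

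For the $L^\infty$ bound on $K_t$ the factor $t$ pulls out and $\|K_t\|_{L^\infty}=2t\sup_{z>0}|J_1(z)/z|$. The function $J_1(z)/z$ tends to $\frac12$ as $z\to0$, is continuous on $(0,\infty)$, and decays like $z^{-3/2}$ at infinity, so its supremum is finite; this gives $\|K_t\|_{L^\infty}\le Ct$. The bound $\|J_t\|_{L^\infty}\le1$ is immediate from $|J_0(z)|\le1$ for all real $z$. For the $L^2$ bound on $K_t$, the substitution gives
\[
\|K_t\|_{L^2}^2=\int_0^\infty\frac{t}{y}\left(J_0'(2\sqrt{ty})\right)^2dy=2t\int_0^\infty\frac{\left(J_1(z)\right)^2}{z}\,dz,
\]
and the remaining integral converges: near $z=0$ the integrand is of order $z$, while near $z=\infty$ it is of order $z^{-2}$ because the $z^{-1}$ decay of $J_1^2$ carries an extra weight $z^{-1}$. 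Hence $\|K_t\|_{L^2}\le Ct^{1/2}$. In fact $\int_0^\infty (J_1(z))^2/z\,dz=\frac12$ and $\sup_{z>0}|J_1(z)/z|=\frac12$, so $C=1$ is admissible in both estimates for $K_t$.

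For the non-membership claims the same substitution reduces each quantity to a one-dimensional integral whose divergence is governed by the slow $z^{-1/2}$ decay of the Bessel functions. Indeed
\[
\|J_t\|_{L^1}=\frac{1}{2t}\int_0^\infty|J_0(z)|\,z\,dz,\qquad \|K_t\|_{L^1}=\int_0^\infty|J_1(z)|\,dz,
\]
both of which diverge since $|J_0(z)|,|J_1(z)|\sim\sqrt{2/(\pi z)}\,|\cos(\cdots)|$, so the integrands are of order $z^{1/2}$ and $z^{-1/2}$ on the oscillation scale; likewise $\|J_t\|_{L^2}^2=\frac{1}{2t}\int_0^\infty(J_0(z))^2z\,dz$ diverges because $(J_0(z))^2z$ is of order one at infinity. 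The only delicacy, which I regard as the main technical point, is to make these divergences rigorous in the presence of oscillation. I would handle this by bounding each integral from below over the successive intervals between consecutive zeros of the relevant cosine, on a fixed proportion of which $|\cos|$ stays bounded away from zero; summing these contributions turns the heuristic decay rates into genuine lower bounds and establishes $K_t,J_t\notin L^1$ and $J_t\notin L^2$.
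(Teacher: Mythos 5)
Your proof is correct and follows essentially the same route as the paper: the paper disposes of this lemma in one line by citing standard properties of the Bessel function $J_0$, and your argument simply supplies the details (the substitution $z=2\sqrt{ty}$, the identity $J_0'=-J_1$, the small- and large-argument asymptotics, and the interval-by-interval lower bounds that make the divergences rigorous despite oscillation). All of your computations, including the exact constants $\sup_{z>0}|J_1(z)/z|=\tfrac12$ and $\int_0^\infty J_1(z)^2 z^{-1}\,dz=\tfrac12$, check out.
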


\begin{proof} These properties follow from properties of Bessel function
$J_{0}(z)$, see, e.g., \cite{grafakos}. \end{proof}

It follows by direct substitution that the linear Cauchy problem (\ref{Cauchy-linear})
has a solution in the form \begin{equation}
Q(y,t)=Q_{0}(y)+\int_{y}^{\infty}K_{t}(y'-y)Q_{0}(y')dy',\quad(y,t)\in\mathbb{R}\times\mathbb{R}_{+},\label{solution-linear}\end{equation}
 which is bounded if $Q_{0}\in L^{\infty}\cap L^{1}$. In addition
to $Q(y,t)$, we shall also consider an integral of $Q(y,t)$ in $y$,
given by \begin{equation}
P(y,t):=-\int_{y}^{\infty}Q(y',t)dy'=-\int_{y}^{\infty}J_{t}(y'-y)Q_{0}(y')dy'.\label{formula-P}\end{equation}

The local well-posedness analysis is based on the integral equation
\begin{equation}
q(t)=Q(t)-\int_{0}^{t}e^{(t-t')L}f(q(t'))p(t')dt',\label{integral-form}\end{equation}
 which follows from Duhamel's principle for the nonlinear Cauchy problem
(\ref{Cauchy}). Here \[
q(t):=q(y,t),\quad p(t):=p(y,t)=-\int_{y}^{\infty}q(y',t)dy',\]
 $Q(t)=e^{tL}q_{0}$ is the solution of the linear problem (\ref{Cauchy-linear})
with $Q_{0}=q_{0}$, and $f(q)$ is defined by (\ref{nonlinear-function}).
We shall work in the space $X_{c}^{s}$ given by \begin{equation}
X_{c}^{s}=\left\{ q\in H^{s}\cap\dot{H}^{-1}:\;\|q\|_{L^{\infty}}\leq q_{c}<1\right\} ,\quad s>\frac{1}{2}.\label{space-X}\end{equation}
 Since $p_{y}=q$, it is clear that $p\in H^{s+1}$ if $p\in L^{2}$
and $q\in H^{s}$. We need to show that the vector field of the integral
equation (\ref{integral-form}) is a Lipschitz map in the vector space
$X^{s}:=H^{s}\cap\dot{H}^{-1}$ equipped with the norm \[
\|q\|_{X^{s}}:=\|q\|_{H^{s}}+\|p\|_{L^{2}}\]
 for any $t\in[0,T]$ and it is a contraction operator for a sufficiently
small $T>0$. This construction gives local well-posedness of the
sine--Gordon equation (\ref{sine-Gordon}).

\begin{theorem}\label{theorem-local-wellposedness} Assume that $q_{0}\in X_{c}^{s}$,
$s>\frac{1}{2}$. There exist a $T>0$ such that the Cauchy problem
(\ref{Cauchy}) admits a unique local solution $q(t)\in C([0,T],X_{c}^{s})$
satisfying $q(0)=q_{0}$. Moreover, the solution $q(t)$ depends continuously
on initial data $q_{0}$. \end{theorem}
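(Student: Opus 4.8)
The plan is to solve the integral equation (\ref{integral-form}) by the Banach fixed-point theorem. Fix $q_{0}\in X_{c}^{s}$ with $\|q_{0}\|_{L^{\infty}}\le q_{c}<1$, choose $q_{c}'\in(q_{c},1)$ and a radius $R>\|q_{0}\|_{X^{s}}$, and work in the complete metric space
\[
\mathcal{B}_{T}=\left\{ q\in C([0,T],X^{s}):\;\sup_{t\in[0,T]}\|q(t)\|_{X^{s}}\le R,\;\sup_{t\in[0,T]}\|q(t)\|_{L^{\infty}}\le q_{c}'\right\}
\]
equipped with the metric induced by $\sup_{[0,T]}\|\cdot\|_{X^{s}}$. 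On $\mathcal{B}_{T}$ I define $\Phi$ to be the right-hand side of (\ref{integral-form}), namely $\Phi(q)(t)=e^{tL}q_{0}-\int_{0}^{t}e^{(t-t')L}f(q(t'))p(t')\,dt'$ with $p=\partial_{y}^{-1}q$, and the goal is to show that for $T$ small enough $\Phi$ maps $\mathcal{B}_{T}$ into itself and is a contraction. Its unique fixed point is the sought local solution, and the $L^{\infty}$ bound $q_{c}'<1$ places it in the admissible class $X_{c}^{s}$.

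The argument rests on three families of estimates. First, since $s>\frac{1}{2}$, the space $H^{s}(\mathbb{R})$ is a Banach algebra embedded in $L^{\infty}$, which controls the products $f(q)\,p$; combined with $p_{y}=q$ this gives $\|p\|_{H^{s+1}}\le C(\|p\|_{L^{2}}+\|q\|_{H^{s}})$. Second, a composition (Moser-type) estimate for the nonlinearity: because $f$ is analytic on $|q|<1$ with $f(0)=0$ and is squeezed by $\tfrac{q^{2}}{2}\le f(q)\le q^{2}$, one obtains $\|f(q)\|_{H^{s}}\le C(q_{c}')\|q\|_{L^{\infty}}\|q\|_{H^{s}}$ together with a Lipschitz bound $\|f(q_{1})-f(q_{2})\|_{H^{s}}\le C(q_{c}')(\|q_{1}\|_{H^{s}}+\|q_{2}\|_{H^{s}})\|q_{1}-q_{2}\|_{H^{s}}$ valid for $q_{1},q_{2}$ with $\|q_{i}\|_{L^{\infty}}\le q_{c}'$. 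Third, I use the norm-preservation property (\ref{norm-preservation}) of $e^{tL}$ on $H^{s}$ and, since the Fourier multiplier $e^{-it/k}$ has modulus one, the analogous preservation on $\dot{H}^{-1}$; strong continuity of the group then yields $e^{tL}q_{0}\to q_{0}$ in $X^{s}$ as $t\to0$.

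Assembling these, the linear part contributes $\|e^{tL}q_{0}\|_{X^{s}}=\|q_{0}\|_{X^{s}}$, and the Duhamel term is bounded by $T\sup_{[0,T]}\|f(q)p\|_{X^{s}}\le C T R^{3}$ using the algebra and composition estimates; choosing $R=2\|q_{0}\|_{X^{s}}$ and $T$ small makes $\Phi$ a self-map of $\mathcal{B}_{T}$. The Lipschitz bounds give $\sup_{[0,T]}\|\Phi(q_{1})-\Phi(q_{2})\|_{X^{s}}\le C T R^{2}\sup_{[0,T]}\|q_{1}-q_{2}\|_{X^{s}}$, a contraction for $T$ small. The constraint $\|q(t)\|_{L^{\infty}}\le q_{c}'$ is preserved by combining the embedding $\|\cdot\|_{L^{\infty}}\le C_{s}\|\cdot\|_{H^{s}}$, the strong continuity estimate $\|e^{tL}q_{0}-q_{0}\|_{L^{\infty}}\to0$, and the $O(TR^{3})$ size of the Duhamel term, all for $T$ sufficiently small. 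The unique fixed point is the local solution $q\in C([0,T],X_{c}^{s})$, and continuous (indeed Lipschitz) dependence on $q_{0}$ follows from the standard contraction-with-parameter argument applied to the difference of two fixed points.

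The main obstacle is the interaction of the nonlocal antiderivative $\partial_{y}^{-1}$ with the constraint (\ref{constraint-linear}) when estimating the $\dot{H}^{-1}$ component $\|p\|_{L^{2}}$ of the Duhamel term. The operator $\partial_{y}^{-1}$ is unbounded on $L^{2}$, and the nonlinear density $f(q)p$ need not have zero spatial mean, so one cannot simply invoke the $\dot{H}^{-1}$ norm-preservation of $e^{(t-t')L}$ term by term — the singular $k\to0$ behaviour of the multiplier $e^{-it/k}$ must be handled with care. I expect to resolve this by exploiting the structure: either the convolution representation (\ref{formula-P}) with the bounded kernel $J_{t}$ together with the uniform bounds of Lemma \ref{lemma-bounds}, or the relation $p_{y}=q$ and an integration by parts, so that the relevant combination defining $p[\Phi(q)]$ remains in $\dot{H}^{-1}$ with $L^{2}$ norm controlled by $\|q\|_{X^{s}}$. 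Everything else reduces to the routine contraction estimates above.
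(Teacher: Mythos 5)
Your strategy coincides with the paper's: a contraction argument for the Duhamel equation (\ref{integral-form}) in the norm $\|q\|_{X^{s}}=\|q\|_{H^{s}}+\|p\|_{L^{2}}$ on a ball that also carries the constraint $\|q\|_{L^{\infty}}\leq q_{c}'<1$, with the $H^{s}$ component handled by the Banach algebra property and the analyticity of $f$ (the paper uses the Taylor series of $1-\sqrt{1-q^{2}}$, whose coefficients are all positive, in place of your Moser estimate; the two are interchangeable here). The one step you explicitly defer --- the $L^{2}$ bound on the antiderivative $p$ of the Duhamel term --- is precisely where the paper's proof has its only non-routine content, so as written your argument has a hole exactly there. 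The paper closes it by the first route you name: it writes $p(t)=P(t)-\int_{0}^{t}Le^{(t-t')L}f(q(t'))p(t')\,dt'$ and represents $Le^{(t-t')L}f(q)p$ as the convolution of $f(q)p$ with the bounded kernel $J_{t-t'}$ from (\ref{formula-P}) and Lemma \ref{lemma-bounds}; Young's inequality (\ref{Young}) combined with H\"{o}lder and $\|f(q)\|_{L^{1}}\leq\|q\|_{L^{2}}^{2}$ then gives $\|Le^{(t-t')L}f(q(t'))p(t')\|_{L^{2}}\leq\|q(t')\|_{X^{s}}^{3}$, so one never needs $f(q)p$ itself to have zero mean or to lie in $\dot{H}^{-1}$. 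In particular your interim claim that the Duhamel term is controlled by $T\sup_{[0,T]}\|f(q)p\|_{X^{s}}$ presupposes exactly the $\dot{H}^{-1}$ preservation for the integrand that you then correctly identify as unavailable; the estimate must be run on $Le^{(t-t')L}f(q)p$ directly, as above.

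One further difference worth noting: you propose to control the $L^{\infty}$ size of the free term by strong continuity of the group $e^{tL}$, which yields a $T$ depending on $q_{0}$ itself (through the modulus of continuity of $t\mapsto e^{tL}q_{0}$ at $t=0$) rather than only on $\|q_{0}\|_{X^{s}}$ and $\|q_{0}\|_{L^{\infty}}$. The paper instead uses the explicit kernel $K_{t}$ in (\ref{solution-linear}) and the bound $\|K_{t}\|_{L^{2}}\leq Ct^{1/2}$ to obtain the quantitative estimate $\|Q(t)\|_{L^{\infty}}\leq\|q_{0}\|_{L^{\infty}}+C_{1}t^{1/2}\|q_{0}\|_{X^{s}}$. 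That uniformity of the existence time over data with fixed norms is what the later continuation arguments for global well-posedness actually use, so you should keep the kernel version rather than the soft one.
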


\begin{proof} Fix $s>\frac{1}{2}$, $q_{c}\in(0,1)$, and $\delta\in(0,C_{s}^{-1})$,
where constant $C_{s}>0$ gives the upper bound of the Banach algebra
property \begin{equation}
\forall f,g\in H^{s}:\quad\|fg\|_{H^{s}}\leq C_{s}\|f\|_{H^{s}}\|g\|_{H^{s}}.\label{banach-algebra}\end{equation}
 We assume that $\|q_{0}\|_{X^{s}}\leq\alpha\delta$ and $\|q_{0}\|_{L^{\infty}}\leq\alpha q_{c}$
for a fixed $\alpha\in(0,1)$ and prove that there exists a $T>0$
such that $q(t)\in C([0,T],X_{c}^{s})$ is a solution of the Cauchy
problem (\ref{Cauchy}) such that $q(0)=q_{0}$, $\|q(t)\|_{X^{s}}\leq\delta$,
and $\|q(t)\|_{L^{\infty}}\leq q_{c}$ for all $t\in[0,T]$. To do
so, we find bounds on the supremum of $\|q(t)\|_{H^{s}}$, $\|q(t)\|_{\dot{H}^{-1}}$,
and $\|q(t)\|_{L^{\infty}}$ on $[0,T]$ from the solution of the
integral equation (\ref{integral-form}).

By the triangle inequality, the norm-preserving property (\ref{norm-preservation}),
and the Banach algebra property of $H^{s}$, we obtain \begin{eqnarray*}
\|q(t)\|_{H^{s}} & \leq & \|Q(t)\|_{H^{s}}+\int_{0}^{t}\|e^{(t-t')L}f(q(t'))p(t')\|_{H^{s}}dt'\\
 & \leq & \|q_{0}\|_{H^{s}}+C_{s}\int_{0}^{t}\|f(q(t'))\|_{H^{s}}\|p(t')\|_{H^{s}}dt'.\end{eqnarray*}
 To deal with nonlinear function $f(q)$, we expand it in the Taylor
series \[
\forall|q|<1:\quad f(q)=1-\sqrt{1-q^{2}}=\sum_{n=1}^{\infty}\frac{(2n-3)!!}{n!2^{n}}q^{2n},\]
 which involves only positive coefficients. By invoking again the
Banach algebra property, we obtain\[
\forall C_{s}\|q\|_{H^{s}}<1:\quad\|f(q)\|_{H^{s}}\leq\sum_{n=1}^{\infty}\frac{(2n-3)!!}{n!2^{n}}C_{s}^{2n-1}\|q\|_{H^{s}}^{2n}=\frac{1-\sqrt{1-C_{s}^{2}\|q\|_{H^{s}}^{2}}}{C_{s}}\leq C_{s}\|q\|_{H^{s}}^{2},\]
 thanks to the representation (\ref{nonlinear-function}). As a result,
we have\begin{equation}
\|q(t)\|_{H^{s}}\leq\|q_{0}\|_{H^{s}}+C_{s}^{2}\int_{0}^{t}\|q(t')\|_{X^{s}}^{3}dt'.\label{bound-1}\end{equation}

To estimate the $L^{2}$ norm of $p(t)$, we use the integral representation
\[
p(t)=P(t)-\int_{0}^{t}Le^{(t-t')L}f(q(t'))p(t')dt',\]
 where $P(t)=LQ(t)$ is defined by solution of the same linear problem
(\ref{Cauchy-linear}) with initial data $P(0)=p_{0}$. By the triangle
inequality and the norm-preservation property, we obtain \begin{eqnarray*}
\|p(t)\|_{L^{2}} & \leq & \|P(t)\|_{L^{2}}+\int_{0}^{t}\|Le^{(t-t')L}f(q(t'))p(t')\|_{L^{2}}dt',\\
 & \leq & \|p_{0}\|_{L^{2}}+\int_{0}^{t}\|Le^{(t-t')L}f(q(t'))p(t')\|_{L^{2}}dt'.\end{eqnarray*}
 The norm preservation (\ref{norm-preservation}) is not useful for
the second term since $Lf(q(t))p(t)$ may not be in $L^{2}$. On the
other hand, using formula (\ref{formula-P}), we write \begin{equation}
Le^{(t-t')L}f(q(t'))p(t')=-\int_{y}^{\infty}J_{t-t'}(y'-y)f(q(y',t'))p(y',t')dy'.\label{expression-f-p}\end{equation}
 We shall use the Hausdorf-Young's inequality \begin{equation}
\|f\star g\|_{L^{p}}\leq\|f\|_{L^{q}}\|g\|_{L^{r}},\label{Young}\end{equation}
 where $p,q,r$ are related by the constraint $q^{-1}+r^{-1}=1+p^{-1}$
and the star denotes convolution operator $f\star g=\int_{\mathbb{R}}f(y')g(y-y')dy'$.
Using inequality (\ref{Young}) and Lemma \ref{lemma-bounds}, we
obtain \[
\|Le^{(t-t')L}f(q(t'))p(t')\|_{L^{2}}\leq\|J_{t-t'}\|_{L^{\infty}}\|f(q(t'))p(t')\|_{L^{2/3}}\leq\|f(q(t'))p(t')\|_{L^{2/3}}.\]
 Using the Hï¿½lder inequality, we obtain \[
\|f(q(t))p(t)\|_{L^{1}}\leq\|f(q(t))\|_{L^{\rho}}\|p(t)\|_{L^{r}},\]
 with $\rho^{-1}+r^{-1}=1$, so that \[
\|Le^{(t-t')L}f(q(t'))p(t')\|_{L^{2}}\leq\|f(q(t'))\|_{L^{2\rho/3}}\|p(t')\|_{L^{2r/3}}.\]
 If we choose $r=3$, then we have $\rho=\frac{3}{2}$ and $\|f(q)\|_{L^{1}}\leq\|q\|_{L^{2}}^{2}$.
As a result, we conclude that \begin{equation}
\|p(t)\|_{L^{2}}\leq\|p_{0}\|_{L^{2}}+\int_{0}^{t}\|q(t')\|_{X^{s}}^{3}dt'.\label{bound-2}\end{equation}

Finally, we estimate the $L^{\infty}$ norm of $q(t)$ from the integral
equation (\ref{integral-form}). We obtain \begin{eqnarray*}
\|q(t)\|_{L^{\infty}} & \leq & \|Q(t)\|_{L^{\infty}}+\int_{0}^{t}\|e^{(t-t')L}f(q(t'))p(t')\|_{L^{\infty}}dt'.\end{eqnarray*}
 By Lemma \ref{lemma-bounds}, the convolution formula (\ref{solution-linear}),
and the Hausdorf-Young's inequality (\ref{Young}), the free term
is estimated by \begin{eqnarray*}
\|Q(t)\|_{L^{\infty}}\leq\|q_{0}\|_{L^{\infty}}+\|K_{t}\|_{L^{2}}\|q_{0}\|_{L^{2}}\leq\|q_{0}\|_{L^{\infty}}+C_{1}t^{1/2}\|q_{0}\|_{X^{s}},\end{eqnarray*}
 for some $C_{1}>0$. The nonlinear term is estimated by \begin{eqnarray*}
\|e^{(t-t')L}f(q(t'))p(t')\|_{L^{\infty}} & \leq & \|f(q(t'))p(t')\|_{L^{\infty}}+\|K_{t-t'}\|_{L^{\infty}}\|f(q(t'))p(t')\|_{L^{1}}\\
 & \leq & \|q(t')\|_{L^{\infty}}^{2}\|p(t')\|_{L^{\infty}}+\|K_{t-t'}\|_{L^{\infty}}\|q(t')\|_{L^{\infty}}\|q(t')\|_{L^{2}}\|p(t')\|_{L^{2}}\\
 & \leq & C_{2}(1+(t-t'))\|q(t')\|_{X^{s}}^{3},\end{eqnarray*}
 for some $C_{2}>0$. As a result, we conclude that \begin{equation}
\|q(t)\|_{L^{\infty}}\leq\|q_{0}\|_{L^{\infty}}+C_{1}t^{1/2}\|q_{0}\|_{X^{s}}+C_{2}\int_{0}^{t}(1+(t-t'))\|q(t')\|_{X^{s}}^{3}dt'.\label{bound-3}\end{equation}
 Using (\ref{bound-1}), (\ref{bound-2}), and (\ref{bound-3}), we
can see that there exists $T=T(s,q_{c},\delta,\alpha)>0$ such that
the vector field of the integral equation (\ref{integral-form}) is
a closed map of a finite-radius ball in $X_{c}^{s}$ to itself. The
value of $T>0$ satisfies the bounds \begin{eqnarray*}
\alpha+T(1+C_{s}^{2})\delta^{2} & \leq & 1,\\
\alpha q_{c}+C_{1}T^{1/2}\alpha\delta+\frac{1}{2}C_{2}T(T+2)\delta^{3} & \leq & q_{c}.\end{eqnarray*}
 Moreover, since $f(q)$ behaves like a quadratic function, a similar
analysis shows that the map is Lipschitz with respect to $q$ and
it is a contraction if $T>0$ is sufficiently small. Existence of
a unique fixed point of the integral equation (\ref{integral-form})
in a complete space $C([0,T],X_{c}^{s})$ follows by the contraction
mapping principle (see, e.g. \cite{Zeidel}). \end{proof}

\begin{corollary} Under the conditions of Theorem \ref{theorem-local-wellposedness},
we actually have $q(t)\in C([0,T],X_{c}^{s})\cap C^{1}([0,T],H^{s})$.
\label{corollary-C1} \end{corollary}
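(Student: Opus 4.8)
The plan is to show that the candidate time derivative $q_t=(1-f(q))p$ of the mild solution furnished by Theorem \ref{theorem-local-wellposedness} is itself a continuous $H^s$-valued function, and then to upgrade the Duhamel identity (\ref{integral-form}) to a genuine $C^1$ statement. First I would record the regularity of the right-hand side. Since $q\in C([0,T],X_c^s)$, the definition of the $X^s$ norm gives $p\in C([0,T],L^2)$, while $p_y=q\in C([0,T],H^s)$; as $\|p\|_{H^{s+1}}^2$ is comparable to $\|p\|_{L^2}^2+\|q\|_{H^s}^2$, this yields $p\in C([0,T],H^{s+1})$. The map $q\mapsto f(q)$ is locally Lipschitz from $\{q\in H^s:\|q\|_{L^\infty}\le q_c\}$ into $H^s$ (this is exactly the Taylor-series and Banach-algebra estimate already used in the proof of Theorem \ref{theorem-local-wellposedness}), so $f(q)\in C([0,T],H^s)$, and by the Banach algebra property (\ref{banach-algebra}) the product $(1-f(q))p=p-f(q)p$ lies in $C([0,T],H^s)$. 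This is the function that will turn out to be $q_t$.

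Next I would exploit the group structure. The multiplier identity $\mathcal{F}(e^{tL})=e^{-it/k}$ shows, by dominated convergence in the Fourier variable, that $\{e^{tL}\}_{t\in\mathbb{R}}$ is a $C_0$-group of isometries on every $H^\sigma$ (this is the content of (\ref{norm-preservation})), whose generator is $L=\partial_y^{-1}$ with domain $\{u\in H^\sigma:\partial_y^{-1}u\in H^\sigma\}$. Because $q_0\in\dot{H}^{-1}$ with $\partial_y^{-1}q_0=p_0\in H^{s+1}$, the initial datum lies in this domain, so $t\mapsto e^{tL}q_0$ is $C^1$ into $H^s$ with derivative $P(t)=e^{tL}p_0$. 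It then remains to differentiate the Duhamel term $v(t):=\int_0^t e^{(t-t')L}f(q(t'))p(t')\,dt'$. Writing $g:=f(q)p\in C([0,T],H^s)$ and $v(t)=e^{tL}G(t)$ with $G(t):=\int_0^t e^{-t'L}g(t')\,dt'$, the product rule reduces the computation of $v_t$ to two limits: the routine $e^{(t+h)L}h^{-1}(G(t+h)-G(t))\to e^{tL}G'(t)=g(t)$ in $H^s$, and $h^{-1}(e^{hL}-1)v(t)\to Lv(t)$, which requires $v(t)$ to lie in the generator domain, i.e. $Lv(t)\in H^s$.

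The main obstacle is precisely this last point, because $Lg=\partial_y^{-1}(f(q)p)$ need not belong to $H^s$: the product $f(q)p$ generally fails the zero-mean constraint built into $\dot{H}^{-1}$, so its antiderivative is not square integrable, let alone in $H^s$. The resolution is that the whole integral, unlike its integrand, is regular. Applying $\partial_y^{-1}$ to the integral equation (\ref{integral-form}) and using that $L$ is closed together with the $L^2$ bound on $Le^{(t-t')L}f(q)p$ already established in the proof of Theorem \ref{theorem-local-wellposedness} (via (\ref{formula-P}), (\ref{expression-f-p}) and Lemma \ref{lemma-bounds}), I recover exactly the integral equation for $p$, namely $p(t)=P(t)-Lv(t)$, whence $Lv(t)=P(t)-p(t)\in H^{s+1}\subset H^s$. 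Thus $v(t)$ lies in the generator domain, the delicate limit equals $P(t)-p(t)$, and $v\in C^1([0,T],H^s)$ with $v_t=g+(P(t)-p(t))$.

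Combining with $\tfrac{d}{dt}e^{tL}q_0=P(t)$ and the Duhamel identity $q=e^{tL}q_0-v$ gives $q_t=P(t)-v_t=p(t)-f(q)p=(1-f(q))p$, which is continuous in $H^s$ by the first step. Hence $q\in C([0,T],X_c^s)\cap C^1([0,T],H^s)$, as claimed. I expect the only nonroutine ingredient to be the argument of the third paragraph, where the failure of $f(q)p$ to lie in $\dot{H}^{-1}$ is circumvented by reading off $Lv(t)=P(t)-p(t)$ from the already-proven $p$-equation rather than from the integrand.
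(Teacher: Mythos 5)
Your argument is correct and lands on exactly the two ingredients that the paper's one-line proof invokes, namely the identity $q_t=\sqrt{1-q^2}\,p=(1-f(q))p$ and the fact that $p(t)\in C([0,T],H^{s+1})$; the paper simply omits the semigroup-theoretic justification that the mild solution may be differentiated in $t$, which you supply. Your device of reading $Lv(t)=P(t)-p(t)$ off the already-established integral equation for $p$ (rather than from the integrand $f(q)p$, which indeed need not lie in $\dot{H}^{-1}$) is a sound way to place the Duhamel term in the generator's domain, so this is the same approach carried out in full detail.
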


\begin{proof} The proof is based on the identity $q_{t}=\sqrt{1-q^{2}}p$
and the fact that $p(t)\in C([0,T],H^{s+1})$. \end{proof}

\begin{remark} Existence of a unique solution can be proved more
easily in a weaker space \[
\tilde{X}_{c}^{s}=\left\{ q\in H^{s},\;\; p\in L^{\infty}:\;\|q\|_{L^{\infty}}\leq q_{c}<1\right\} ,\quad s>\frac{1}{2},\]
 provided that $q_{0}\in X_{c}^{s}$. Since $p\in H^{1}$ if $q,p\in L^{2}$,
$X_{c}^{s}$ is continuously embedded to $\tilde{X}_{c}^{s}$. The
space $X_{c}^{s}$ turns out to be more suitable if we are to use
conserved quantities of the sine--Gordon equation. \end{remark}

\section{Correspondence between short-pulse and sine--Gordon equations}

We start by stating the local well-posedness theorem for the short-pulse
equation (\ref{short-pulse}) from \cite{SW04}.

\begin{theorem}{[}Schäfer \& Wayne, 2004] Assume that $u_{0}\in H^{2}$.
There exists a $T>0$ such that the short-pulse equation (\ref{short-pulse})
admits a unique solution \[
u(t)\in C([0,T],H^{2})\cap C^{1}([0,T],H^{1})\]
 satisfying $u(0)=u_{0}$. Furthermore, the solution $u(t)$ depends
continuously on $u_{0}$. \label{theorem-wayne} \end{theorem}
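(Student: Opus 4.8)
Since (\ref{short-pulse}) integrates once in $x$ to the nonlocal evolution equation $u_t=\partial_x^{-1}u+\frac{1}{2}u^2u_x$, a direct Duhamel/Picard scheme in $H^2$ fails: the quasilinear term $u^2u_x$ maps $H^2$ only into $H^1$, so the vector field loses a derivative and is not Lipschitz on $H^2$. This is precisely why \cite{SW04} resorts to energy estimates along characteristics. I would instead bypass the derivative loss by transporting the already-established local theory of the sine--Gordon equation (Theorem \ref{theorem-local-wellposedness}) through the transformation (\ref{transformation}).

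The first step is the initial-data correspondence. Given $u_0\in H^2$, note $u_0'\in H^1\hookrightarrow L^\infty$, so the hodograph map $y_0(x)=\int_0^x\sqrt{1+u_0'(x')^2}\,dx'$ is a smooth increasing bijection of $\mathbb{R}$ with inverse $x_0=y_0^{-1}$. Setting $w_0(y)=\arctan\!\big(u_0'(x_0(y))\big)$ and $q_0=\sin(w_0)=u_0'(x_0)/\sqrt{1+u_0'(x_0)^2}$, one checks that $q_0$ lands in the space $X_c^1$ of (\ref{space-X}): the bound $\|q_0\|_{L^\infty}=\|u_0'\|_{L^\infty}/\sqrt{1+\|u_0'\|_{L^\infty}^2}<1$ holds automatically (no smallness required), the constraint (\ref{constraint-linear}) reduces to $\int_{\mathbb{R}}q_0\,dy=\int_{\mathbb{R}}u_0'\,dx=0$ since $u_0$ vanishes at infinity, and $q_0\in H^1\cap\dot H^{-1}$ follows from $u_0\in H^2$ after changing variables back to $x$. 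The decisive computation is that an $x$-derivative acts as $\frac{1}{\cos(w)}\partial_y$, giving the clean identities $u_x=\tan(w)$ and $u_{xx}=w_y/\cos^3(w)$; together with the Jacobian $dx=\cos(w)\,dy$ and the uniform lower bound $\cos(w)\ge\cos(w_c)>0$ on $H_c^1$, these show that the $H^2(dx)$ norm of $u$ is equivalent to the $X^1$ norm of $q$.

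With this dictionary in hand, I would apply Theorem \ref{theorem-local-wellposedness} with $s=1$ to obtain a unique $q(t)\in C([0,T],X_c^1)$, and Corollary \ref{corollary-C1} to upgrade it to $C^1([0,T],H^1)$. Reconstructing $w=\arcsin(q)\in H_c^1$, solving (\ref{transformation}) for $x=x(y,t)$ (invertible because $\partial x/\partial y=\cos(w)>0$), and defining $u(x,t)=w_t(y(x,t),t)=p(y(x,t),t)$ then produces the short-pulse solution; the norm equivalence of the previous step yields $u(t)\in C([0,T],H^2)$, while the $C^1$-in-time regularity of $q$ transfers to $u\in C^1([0,T],H^1)$. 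Uniqueness and continuous dependence on $u_0$ follow by transporting the corresponding statements for $q$ back through the data map and the reconstruction, both of which are locally Lipschitz between $H^2$ and $X_c^1$.

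The main obstacle is making the two nonlinear changes of variables rigorous and quantitative: the initial hodograph map $u_0\mapsto q_0$ and the time-dependent reconstruction $q(t)\mapsto u(t)$ must be controlled as locally Lipschitz maps between $H^2$ and $X_c^1$, uniformly on $[0,T]$. In particular one must justify the chain-rule identities above at $H^2$ regularity, track the dependence of all constants on the lower bound $\cos(w_c)$, and verify the $C^1$-in-time claim carefully, since the spatial coordinate $y=y(x,t)$ itself evolves in $t$ and differentiating $u(x,t)=w_t(y(x,t),t)$ at fixed $x$ brings in $\partial_t y$ determined by the second equation in (\ref{transformation}).
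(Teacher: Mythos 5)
The paper does not prove this statement at all: it is quoted verbatim from Sch\"afer \& Wayne \cite{SW04}, whose original argument runs modified Picard iterations along the characteristics of a quasi-linear hyperbolic reformulation, precisely to defeat the derivative loss you correctly identify in the naive Duhamel scheme for $u_t=\partial_x^{-1}u+\tfrac12 u^2u_x$. Your route --- transport the sine--Gordon local theory of Theorem \ref{theorem-local-wellposedness} through the hodograph transformation (\ref{transformation}) --- is exactly the ``alternative proof'' that the authors advertise in the introduction but never write out; the ingredients you list (the data map $u_0\mapsto q_0=u_0'/\sqrt{1+(u_0')^2}$ in the variable $y_0(x)=\int_0^x\sqrt{1+(u_0')^2}$, the identities $u_x=\tan(w)$, $u_{xx}=w_y/\cos^3(w)$, and the norm equivalence) are the same ones the paper uses in Lemma \ref{lemma-correspondence} and Theorem \ref{theorem-joint}, except that there they are used to \emph{combine} the two local theories and upgrade regularity, taking both Theorem \ref{theorem-local-wellposedness} and Theorem \ref{theorem-wayne} as given, rather than to derive one from the other. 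Your observation that no smallness of $u_0$ is needed for $\|q_0\|_{L^\infty}<1$ is correct and is the reason this strategy can reproduce the full strength of the Sch\"afer--Wayne result. What the transformation route buys is a semilinear fixed-point problem with no derivative loss; what the original route buys is independence from the integrable change of variables.

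There is, however, a genuine gap in the last step. Existence follows from your construction, but \emph{uniqueness} of $u$ in $C([0,T],H^2)\cap C^1([0,T],H^1)$ does not follow from uniqueness of $q$ unless you also show that \emph{every} short-pulse solution in that class arises as the image of a sine--Gordon solution under (\ref{transformation}). That requires taking an arbitrary hypothetical solution $\tilde u$, defining $\tilde y(x,t)=\int_0^x\sqrt{1+\tilde u_x^2}\,dx'$ (suitably normalized in $t$ via the second equation of (\ref{transformation})) and $\tilde q=\tilde u_x/\sqrt{1+\tilde u_x^2}$, and verifying that $\tilde q$ solves the Cauchy problem (\ref{Cauchy}) with data $q_0$ --- a computation that needs $\tilde u_{xt}\in C([0,T],L^2)$ and a justification of the chain rule through the time-dependent diffeomorphism at this regularity. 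Similarly, continuous dependence requires Lipschitz control of the composition operator $v\mapsto v\circ y(\cdot,t)^{-1}$ in $H^2$ as the diffeomorphism itself varies with the solution; this is standard but not free, since composition with a varying $C^1$ diffeomorphism is continuous but not smooth on Sobolev spaces. You flag both points as ``obstacles'' at the end, but as written they are the parts of the theorem your argument does not yet deliver, and they are exactly the parts for which the paper leans on \cite{SW04} instead.
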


We can now compare the results following from Theorem \ref{theorem-local-wellposedness}
with Theorem \ref{theorem-wayne}. Using the transformation (\ref{transformation})
and setting $q=\sin(w)$ and $p_{y}=q$, we have \begin{equation}
\begin{array}{lcl}
u & = & w_{t}=\frac{q_{t}}{\sqrt{1-q^{2}}}=p,\\
u_{x} & = & \frac{w_{ty}}{\cos(w)}=\tan(w)=\frac{p_{y}}{\sqrt{1-q^{2}}},\\
u_{xx} & = & \frac{w_{y}}{\cos^{3}w}=\frac{p_{yy}}{\left(1-q^{2}\right)^{2}}.\end{array}\label{transformation-u-w}\end{equation}

If $q(t)\in X_{c}^{1}$ for all $t\in[0,T]$, there exists a uniform
bound $q_{c}\in(0,1)$ such that $\|q(t)\|_{L^{\infty}}\leq q_{c}$
for all $t\in[0,T]$. As a result, the $H^{2}$ norm on $u$ in $x$
is equivalent to the $H^{2}$ norm on $p$ in $y$ and the $H^{1}$
norm on $q$ in $y$, since $p_{y}=q$. The following lemma summarizes
on the correspondence.

\begin{lemma} \label{lemma-correspondence} Assume that $\|q\|_{L^{\infty}}\leq q_{c}<1$
and consider transformations (\ref{transformation}) and (\ref{transformation-u-w}).
There exist $C^{\pm}>0$ such that \[
\forall u,p\in H^{2}(\mathbb{R}):\quad C^{-}\|p(t)\|_{H^{2}}\leq\|u(t)\|_{H^{2}}\leq C^{+}\|p(t)\|_{H^{2}}.\]
 \end{lemma}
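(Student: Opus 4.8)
The plan is to change variables in each of the three $L^{2}$ integrals that make up $\|u\|_{H^{2}}$ from the physical variable $x$ to the characteristic variable $y$, using the first relation in the transformation (\ref{transformation}), namely $\partial x/\partial y=\cos(w)=\sqrt{1-q^{2}}$. The crucial observation is that under the standing hypothesis $\|q\|_{L^{\infty}}\leq q_{c}<1$ the Jacobian is pinched, $\sqrt{1-q_{c}^{2}}\leq\sqrt{1-q^{2}}\leq1$, so $x=x(y,t)$ is a bi-Lipschitz diffeomorphism of $\mathbb{R}$ and the substitution $dx=\sqrt{1-q^{2}}\,dy$ is legitimate. All the positive and negative powers of $(1-q^{2})$ that will appear are then bounded above and below by constants depending only on $q_{c}$.

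First I would record, from the correspondence (\ref{transformation-u-w}), the pointwise identities $u=p$, $u_{x}=p_{y}/\sqrt{1-q^{2}}$, and $u_{xx}=p_{yy}/(1-q^{2})^{2}$, so that each $x$-derivative of $u$ is written as a $y$-derivative of $p$ multiplied by a bounded power of $(1-q^{2})^{-1}$. Squaring, substituting into $\int_{\mathbb{R}}u^{2}\,dx$, $\int_{\mathbb{R}}u_{x}^{2}\,dx$, and $\int_{\mathbb{R}}u_{xx}^{2}\,dx$, and changing variables to $y$ turns these into $\int_{\mathbb{R}}p^{2}(1-q^{2})^{1/2}\,dy$, $\int_{\mathbb{R}}p_{y}^{2}(1-q^{2})^{-1/2}\,dy$, and $\int_{\mathbb{R}}p_{yy}^{2}(1-q^{2})^{-7/2}\,dy$, respectively (the exponent $-7/2$ coming from $-4$ in $u_{xx}^{2}$ plus $+1/2$ from the Jacobian).

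Since each weight lies between two positive constants determined by $q_{c}$, every one of these integrals is comparable to the corresponding unweighted integral $\int_{\mathbb{R}}p^{2}\,dy$, $\int_{\mathbb{R}}p_{y}^{2}\,dy$, $\int_{\mathbb{R}}p_{yy}^{2}\,dy$. Adding the three comparisons and taking square roots yields the two-sided estimate $C^{-}\|p\|_{H^{2}}\leq\|u\|_{H^{2}}\leq C^{+}\|p\|_{H^{2}}$ with $C^{\pm}$ depending only on $q_{c}$, which is the assertion of the lemma.

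The step requiring the most care is justifying the change of variables itself: I must invoke the invertibility of $x=x(y,t)$, which holds because $w(\cdot,t)$ lies in $H_{c}^{s}$ defined by (\ref{space-Hs-c}), so that $|w|<\pi/2$ and $\cos w$ stays uniformly positive. I should also confirm that the integrability of the most singular transformed integrand, the weight $(1-q^{2})^{-7/2}$ multiplying $p_{yy}^{2}$, is controlled purely by the uniform bound $\|q\|_{L^{\infty}}\leq q_{c}$ and therefore requires no additional decay of $q$. The bookkeeping of the powers of $(1-q^{2})$ in the $u_{xx}$ term is the only place where an arithmetic slip is likely, but it is otherwise routine.
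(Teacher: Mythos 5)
Your proposal is correct and follows essentially the same route as the paper: the paper's proof consists precisely of the three weighted comparisons $\sqrt{1-q_{c}^{2}}\|p\|_{L^{2}}^{2}\leq\|u\|_{L^{2}}^{2}\leq\|p\|_{L^{2}}^{2}$, $\|p_{y}\|_{L^{2}}^{2}\leq\|u_{x}\|_{L^{2}}^{2}\leq(1-q_{c}^{2})^{-1/2}\|p_{y}\|_{L^{2}}^{2}$, and $\|p_{yy}\|_{L^{2}}^{2}\leq\|u_{xx}\|_{L^{2}}^{2}\leq(1-q_{c}^{2})^{-7/2}\|p_{yy}\|_{L^{2}}^{2}$, obtained exactly by your change of variables $dx=\sqrt{1-q^{2}}\,dy$ together with the pointwise identities from (\ref{transformation-u-w}). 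Your exponent bookkeeping (in particular the $-7/2$ for the $u_{xx}$ term) matches the paper's constants.
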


\begin{proof} The proof is given by direct computations, e.g. \begin{eqnarray*}
\sqrt{1-q_{c}^{2}}\|p\|_{L^{2}}^{2} & \leq\|u\|_{L^{2}}^{2}\leq & \|p\|_{L^{2}}^{2},\\
\|\partial_{y}p\|_{L^{2}}^{2} & \leq\|\partial_{x}u\|_{L^{2}}^{2}\leq & \frac{1}{\sqrt{1-q_{c}^{2}}}\|\partial_{y}p\|_{L^{2}}^{2},\\
\|\partial_{y}^{2}p\|_{L^{2}}^{2} & \leq\|\partial_{x}^{2}u\|_{L^{2}}^{2}\leq & \frac{1}{\sqrt{\left(1-q_{c}^{2}\right)^{7}}}\|\partial_{y}^{2}p\|_{H^{2}}^{2},\end{eqnarray*}
 where $q_{c}<1$ by the assumption of the lemma. \end{proof}

Combining Theorems \ref{theorem-local-wellposedness} and \ref{theorem-wayne},
we obtain a more precise result on local well-posedness of the short-pulse
and sine--Gordon equations.

\begin{theorem} Let $q(t)\in C([0,T_{1}],X_{c}^{1})\cap C^{1}([0,T],H^{1})$
be a solution of the sine--Gordon equation in Theorem \ref{theorem-local-wellposedness}
and Corollary \ref{corollary-C1} and $T_{1}>0$. Let $u(t)\in C([0,T_{2}],H^{2})\cap C^{1}([0,T_{2}],H^{1})$
be a solution of the short-pulse equation in Theorem \ref{theorem-wayne}
for some $T_{2}>0$. Let $q_{0}$, $p_{0}$ and $u_{0}$ be related
by the transformations (\ref{transformation}) and (\ref{transformation-u-w}).
Then, in fact, $p(t)\in C^{1}([0,T],H^{2})$ and $u(t)\in C^{1}([0,T],H^{2})$
for $T=\min(T_{1},T_{2})$, where $p_{y}=q$. \label{theorem-joint}
\end{theorem}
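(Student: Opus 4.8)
The plan is to upgrade the time regularity of the two local solutions from $C^1$ into $H^1$ to $C^1$ into $H^2$, exploiting the fact that the two solutions in the hypotheses carry \emph{complementary} information: the sine--Gordon solution controls the spatial regularity of the time derivative, whereas the short-pulse solution controls the time derivative itself in $L^2$. I would run the argument primarily for $p$ in the variable $y$ and then transfer the conclusion to $u$ through the correspondence of Lemma~\ref{lemma-correspondence}.

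First I would extract from Corollary~\ref{corollary-C1} that $q\in C^1([0,T],H^1)$, so $q_t\in C([0,T],H^1)$. Since $p_y=q$, differentiating in $t$ gives $(p_t)_y=q_t\in H^1$; hence, \emph{once $p_t$ is known to lie in $L^2$}, the identity $(p_t)_y=q_t\in H^1$ immediately promotes $p_t$ to $H^2$, and the whole problem for $p$ reduces to a single $L^2$ bound on $p_t$. To obtain it I would use the transformation identities (\ref{transformation-u-w}): integrating the short-pulse equation (\ref{short-pulse}) once in $x$ gives $u_t=\partial_x^{-1}u+\tfrac12 u^2u_x$, while the change of variables $x_y=\cos w=\sqrt{1-q^2}$ together with (\ref{transformation-u-w}) identifies $p_t=\partial_x^{-1}u$, so that $p_t=u_t-\tfrac12 u^2u_x$. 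By Theorem~\ref{theorem-wayne} we have $u\in C([0,T],H^2)\cap C^1([0,T],H^1)$, whence $u_t\in H^1$ and, by the Banach algebra property of $H^2$, also $\tfrac12 u^2u_x\in H^1$. Therefore $p_t\in H^1\subset L^2$, and combining with $(p_t)_y=q_t\in H^1$ yields $p_t\in C([0,T],H^2)$, that is $p\in C^1([0,T],H^2)$. (Alternatively, $\|p_t\|_{L^2}$ could be bounded intrinsically from $p_t=\partial_y^{-1}q_t$ via the Bessel-kernel representation and Lemma~\ref{lemma-bounds} with Young's inequality, exactly as $\|p\|_{L^2}$ was controlled in the proof of Theorem~\ref{theorem-local-wellposedness}.)

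With $p\in C^1([0,T],H^2)$ in hand I would transfer to $u$. For each fixed $t$ the map $x=x(y,t)$ is a bounded, nondegenerate diffeomorphism, since $x_y=\sqrt{1-q^2}\in[\sqrt{1-q_c^2},1]$, so the $H^2_y\leftrightarrow H^2_x$ equivalence of Lemma~\ref{lemma-correspondence} applies and carries the established time-derivative regularity of $p$ over to $u$, giving $u\in C^1([0,T],H^2)$ on $T=\min(T_1,T_2)$.

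I expect the $L^2$ control of $p_t$ to be the main obstacle. A priori $q_t\in H^1$ does not place its antiderivative $\partial_y^{-1}q_t$ in $L^2$, because $\partial_y^{-1}$ is unbounded at low frequency; the bound genuinely relies on the constraint (\ref{constraint-linear}), equivalently on $\int_{\mathbb R}q_t\,dy=\frac{d}{dt}\int_{\mathbb R}q\,dy=0$, realized here through the identity $p_t=u_t-\tfrac12 u^2u_x$ (or through the decay of the kernel $J_t$). A secondary subtlety, to be handled with care when passing between $p_t$ and $u_t$, is that $\partial_t$ at fixed $x$ differs from $\partial_t$ at fixed $y$ along the moving coordinate $x=x(y,t)$; it is precisely the structural relation $u_t=p_t+\tfrac12 u^2u_x$ that reconciles the two notions and keeps the transfer of regularity under control.
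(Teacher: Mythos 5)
Your treatment of $p$ is essentially the paper's argument, made more explicit. The paper combines the same two complementary pieces of information: $q=p_y\in C^1([0,T],H^1)$ from Corollary \ref{corollary-C1} supplies $(p_t)_y=q_t\in H^1$, while Theorem \ref{theorem-wayne} together with the identification $u=p$ supplies $p_t\in C([0,T],L^2)$; the only difference is that the paper cites Lemma \ref{lemma-correspondence} for the second piece where you write out the underlying identity $p_t=u_t-\frac{1}{2}u^2u_x=\partial_x^{-1}u$. Making that identity explicit is a genuine improvement, since Lemma \ref{lemma-correspondence} as stated only equates spatial norms at a fixed time, and it is exactly this relation that legitimizes transporting $C^1$-in-time information through the $t$-dependent change of variables.

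Where you diverge is the $u$-direction, and there your step has a gap. You propose to carry $p_t\in H^2_y$ over to $u_t\in H^2_x$ by the norm equivalence of Lemma \ref{lemma-correspondence}. But $u_t$ at fixed $x$ is not the pullback of $p_t$ at fixed $y$: by your own identity, $u_t=p_t\circ y+\frac{1}{2}u^2u_x$, and for $u\in H^2$ the transport term $\frac{1}{2}u^2u_x=\frac{1}{6}\partial_x(u^3)$ lies a priori only in $H^1$ (its second derivative contains $u^2u_{xxx}$, which $u\in H^2$ does not control). So the spatial norm equivalence for pullbacks cannot, by itself, promote $u_t$ to $H^2$; as written your transfer yields only $u_t\in H^1$, which is already contained in Theorem \ref{theorem-wayne}. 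The paper closes this step differently: it uses the relation $u_x=q/\sqrt{1-q^2}$ together with $q\in C^1([0,T],H^1)$ to conclude $u_x\in C^1([0,T],H^1)$, and then combines this with $u\in C^1([0,T],H^1)$ from Theorem \ref{theorem-wayne} to obtain $u\in C^1([0,T],H^2)$ --- that is, the extra time-regularity at the level of $u_{xx}$ is extracted from the first-order quantity $q=\sin w$, not from $p_t$. To repair your version you would need either to control $\frac{1}{6}\partial_x(u^3)$ in $H^2$ (which is not available) or to switch to the paper's route through $u_x$.
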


\begin{proof} If $q(t)\in X_{c}^{1}$ on $[0,T_{1}]$, then the bound
$\|q(t)\|_{L^{\infty}}\leq q_{c}$ holds on $[0,T_{1}]$ for some
$q_{c}\in(0,1)$. By Lemma \ref{lemma-correspondence} and Corollary
\ref{corollary-C1}, if $p(t)\in C([0,T_{1}],H^{2})$ then $u(t)\in C([0,T_{1}],H^{2})$
and if $q(t)\in C^{1}([0,T_{1}],H^{1})$ then $u_{x}\in C^{1}([0,T_{1}],H^{1})$.
The first assertion recovers the result of Theorem \ref{theorem-wayne},
while if $T=\min(T_{1},T_{2})$, the second assertion combing with
$u(t)\in C^{1}([0,T],H^{1})$ from Theorem \ref{theorem-wayne} implies
that $u(t)\in C^{1}([0,T],H^{2})$.

In the opposite direction, by Lemma \ref{lemma-correspondence}, if
$u(t)\in C([0,T_{2}],H^{2})\cap C^{1}([0,T_{2}],H^{1})$, then $p(t)\in C([0,T],H^{2})\cap C^{1}([0,T],H^{1})$
for $T=\min(T_{1},T_{2})$. Combining with $q(t)\in C^{1}([0,T],H^{1})$
from Corollary \ref{corollary-C1}, we obtain that $p(t)\in C^{1}([0,T],H^{2})$.
\end{proof}

\begin{remark} Theorem \ref{theorem-joint} shows that the results
on the sine--Gordon equation (\ref{sine-Gordon}) allow us to control
the $C^{1}$ property of $\|\partial_{x}^{2}u\|_{L^{2}}$ in the short-pulse
equation (\ref{short-pulse}), while the results on the short-pulse
equation (\ref{short-pulse}) allow us to control the $C^{1}$ property
of $\|p\|_{L^{2}}$ in the sine--Gordon equation (\ref{sine-Gordon}).
This duality turns out to be useful for rigorous treatment of the
conserved quantities for each of the two equations. \end{remark}

\begin{remark} If $u(t)$ is a solution of the short-pulse equation
in Theorem \ref{theorem-wayne}, then for all $t\in(0,T)$, we have
the zero-mass constraint \begin{eqnarray*}
\int_{\mathbb{R}}udx & = & \int_{\mathbb{R}}w_{t}\cos(w)dy=\frac{d}{dt}\int_{\mathbb{R}}\sin(w)dy=\frac{d}{dt}\int_{\mathbb{R}}qdy\\
 & = & \frac{d}{dt}\left[\lim_{y\to\infty}p(y,t)-\lim_{y\to-\infty}p(y,t)\right]=0,\end{eqnarray*}
 thanks to the fact that $p\in C^{1}([0,T],H^{2})$ from Theorem \ref{theorem-joint}.
We note that the initial data $u_{0}\in H^{2}$ does not have to satisfy
the zero-mass constraint $\int_{\mathbb{R}}u_{0}dx=0$, in which case
$\int_{\mathbb{R}}u(x,t)dx$ jumps from a nonzero value to zero instantaneously
for any $t>0$. See Ablowitz \& Villaroel \cite{AV} for analysis
of a similar problem in the context of the Kadomtsev--Petviashvili
equation. \end{remark}

\section{Global well-posedness of the short-pulse equation (\ref{short-pulse})}

It follows from the method of Picard iterations that the existence
time $T>0$ in Theorems \ref{theorem-wayne} and \ref{theorem-joint}
is inverse proportional to the norm $\|u_{0}\|_{H^{s}}$ of the initial
data. To prove Theorem \ref{theorem-wellposedness}, we need to control
the norm $\|u(T)\|_{H^{2}}$ by a $T$-independent constant. This
constant will be found from the values of conserved quantities of
the short-pulse equation. Formal computations of an infinite hierarchy
of conserved quantities were reported by Brunelli \cite{Br05}. Using
Theorem \ref{theorem-joint}, we shall make a rigorous use of the
conserved quantities.

\begin{lemma} Let $u(t)\in C^{1}([0,T],H^{2})$ be a solution of
the short-pulse equation (\ref{short-pulse}). The following integral
quantities are constant on $[0,T]$: \begin{eqnarray*}
H_{-1} & = & \int_{\mathbb{R}}u^{2}dx,\\
H_{0} & = & \int_{\mathbb{R}}\left(\sqrt{1+u_{x}^{2}}-1\right)dx=\int_{\mathbb{R}}\frac{u_{x}^{2}}{1+\sqrt{1+u_{x}^{2}}}dx,\\
H_{1} & = & \int_{\mathbb{R}}\sqrt{1+u_{x}^{2}}\left[\partial_{x}\left(\frac{u_{x}}{\sqrt{1+u_{x}^{2}}}\right)\right]^{2}dx=\int_{\mathbb{R}}\frac{u_{xx}^{2}}{(1+u_{x}^{2})^{5/2}}dx.\end{eqnarray*}
 \label{lemma-conserved-quantities-short-pulse} \end{lemma}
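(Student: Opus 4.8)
The plan is to carry all three integrals over to the sine--Gordon variables of Section 3, where each collapses to a quantity possessing an elementary local conservation law, and then to use the regularity furnished by Theorem \ref{theorem-joint} to discard the boundary fluxes at $y\to\pm\infty$.

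First I would record that the integrals are finite on $H^2$: since $0\le\sqrt{1+u_x^2}-1=\frac{u_x^2}{1+\sqrt{1+u_x^2}}\le u_x^2$ and $0\le\frac{u_{xx}^2}{(1+u_x^2)^{5/2}}\le u_{xx}^2$, one has $H_{-1}=\|u\|_{L^2}^2$, $H_0\le\|u_x\|_{L^2}^2$ and $H_1\le\|u_{xx}\|_{L^2}^2$, all controlled by $\|u(t)\|_{H^2}^2$, and differentiation under the integral sign in $t$ is legitimate because $u(t)\in C^1([0,T],H^2)$. Using the transformation (\ref{transformation})--(\ref{transformation-u-w}) with $q=\sin w$, $p=w_t=u$, $dx=\cos w\,dy$, together with $1+u_x^2=\sec^2 w$ and $u_{xx}=w_y/\cos^3 w$, and recalling that $y\mapsto x$ is a bijection of $\mathbb{R}$ whenever $\|w(\cdot,t)\|_{L^\infty}\le w_c<\frac{\pi}{2}$, I would rewrite
\[
H_{-1}=\int_{\mathbb{R}}w_t^2\cos w\,dy,\qquad H_0=\int_{\mathbb{R}}(1-\cos w)\,dy,\qquad H_1=\int_{\mathbb{R}}w_y^2\,dy.
\]

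Each of these satisfies a pointwise conservation law obtained by differentiating in $t$ and substituting $w_{yt}=\sin w$:
\[
\partial_t(w_y^2)=2\,\partial_y(1-\cos w),\quad \partial_t(1-\cos w)=\frac{1}{2}\,\partial_y(w_t^2),\quad \partial_t(w_t^2\cos w)=\partial_y\!\left(w_{tt}^2-\frac{1}{4}w_t^4\right),
\]
where the third identity uses $\partial_y w_{tt}=\partial_t(\sin w)=\cos w\,w_t$. Integrating each over $y\in\mathbb{R}$ reduces $\frac{d}{dt}H_1$, $\frac{d}{dt}H_0$, $\frac{d}{dt}H_{-1}$ to a single boundary flux, namely $2[1-\cos w]$, $\frac{1}{2}[w_t^2]$ and $[w_{tt}^2-\frac{1}{4}w_t^4]$ evaluated at $y=\pm\infty$. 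Since $q=\sin w\in H^s$ forces $w\to 0$ (hence $\cos w\to 1$), and, by Theorem \ref{theorem-joint}, $w_t=p$ and $w_{tt}=p_t$ both belong to $H^2$ as functions of $y$, so that $w_t,w_{tt}\to 0$ as $|y|\to\infty$, all three fluxes vanish and $H_{-1},H_0,H_1$ are constant on $[0,T]$.

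The main obstacle is precisely the vanishing of the flux for $H_{-1}$: it contains $w_{tt}^2$, which cannot be discarded on the strength of the sine--Gordon theory alone, since that theory controls $q=\sin w$ (and hence $w$, $w_y$, $w_t$) but says nothing about $w_{tt}$. What rescues the argument is the duality of Theorem \ref{theorem-joint}, whose $C^1([0,T],H^2)$ regularity of $p$ --- inherited from the short-pulse side --- delivers exactly the decay of $w_{tt}=p_t$ that is needed. As a cross-check avoiding $w_{tt}$ altogether, $H_{-1}$ and $H_0$ can also be verified directly in $x$: for $H_{-1}$ one writes $u_t=\partial_x^{-1}u+\frac{1}{6}(u^3)_x$ (using the zero-mass constraint of the Remark, valid for $t>0$, together with continuity of $\|u(t)\|_{L^2}^2$ to include $t=0$) and then invokes the skew-adjointness of $\partial_x^{-1}$ and $\int_{\mathbb{R}}u(u^3)_x\,dx=\frac{3}{4}\int_{\mathbb{R}}(u^4)_x\,dx=0$; for $H_0$ a single integration by parts via $\partial_x\left(u_x/\sqrt{1+u_x^2}\right)=u_{xx}/(1+u_x^2)^{3/2}$ makes the two terms of $\frac{d}{dt}H_0$ cancel.
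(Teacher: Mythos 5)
Your proof is correct, but it takes a genuinely different route from the paper's. The paper proves this lemma entirely in the $x$-variable: it writes balance laws for the densities $u^{2}$, $\sqrt{1+u_{x}^{2}}-1$ and $u_{xx}^{2}(1+u_{x}^{2})^{-5/2}$ with fluxes $v^{2}+\frac{1}{4}u^{4}$ (where $v=\partial_{x}^{-1}u=u_{t}-\frac{1}{2}u^{2}u_{x}$), $\frac{1}{2}u^{2}\sqrt{1+u_{x}^{2}}$ and $\frac{2u_{x}^{2}}{\sqrt{1+u_{x}^{2}}}-\frac{u^{2}u_{xx}^{2}}{2(1+u_{x}^{2})^{5/2}}$, and the sine--Gordon correspondence enters only at one point, namely to show that $uu_{xx}\to0$ as $|x|\to\infty$ (via the identity relating $uu_{xx}$ to $\tan^{2}(w)=q^{2}/(1-q^{2})$, the decay $q\to0$, and the uniform invertibility of $y\mapsto x$), which is what makes the flux for $H_{1}$ vanish. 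You instead transport all three integrals to the $y$-variable, identify them with $E_{-1}$, $E_{0}$, $E_{1}$ of Section 5 (an identity the paper records only in its closing Remark), and run the sine--Gordon balance laws --- essentially the proof of Lemma \ref{lemma-conserved-quantities}. Both routes lean on Theorem \ref{theorem-joint} at the delicate point: the paper to control the decay of $uu_{xx}$, you to get $w_{tt}=p_{t}\in H^{2}$ so that the flux $w_{tt}^{2}-\frac{1}{4}w_{t}^{4}$ for $H_{-1}$ vanishes at infinity, and you correctly single this out as the step that the sine--Gordon theory alone cannot supply. Your version buys a unified treatment of the conservation lemmas for the two equations at the cost of invoking the full correspondence for every quantity; the paper's version stays in the physical variable and needs only one decay fact from the correspondence. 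Your direct $x$-space cross-checks for $H_{-1}$ and $H_{0}$ agree with the paper's first two balance laws.
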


\begin{proof} We shall write the balance equations for the densities
of $H_{-1}$, $H_{0}$, and $H_{1}$: \begin{eqnarray*}
\partial_{t}\left(u^{2}\right) & = & \partial_{x}\left(v^{2}+\frac{1}{4}u^{4}\right),\\
\partial_{t}\left(\sqrt{1+u_{x}^{2}}-1\right) & = & \frac{1}{2}\partial_{x}\left(u^{2}\sqrt{1+u_{x}^{2}}\right),\\
\partial_{t}\left(\frac{u_{xx}^{2}}{\sqrt{(1+u_{x}^{2})^{5}}}\right) & = & \partial_{x}\left(\frac{2u_{x}^{2}}{\sqrt{1+u_{x}^{2}}}-\frac{u^{2}u_{xx}^{2}}{2\sqrt{(1+u_{x}^{2})^{5}}}\right),\end{eqnarray*}
 where $v=\partial_{x}^{-1}u=u_{t}-\frac{1}{2}u^{2}u_{x}$ thanks
to the short-pulse equation (\ref{short-pulse}). If $u(t)\in C^{1}([0,T],H^{2})$,
then $v(t)\in C([0,T],H^{1})$. By Sobolev's embedding, we have $v(t),u(t),u_{x}(t)\in L^{\infty}$
and $v(t),u(t),u_{x}(t)\to0$ as $|x|\to\infty$ for any $t\in[0,T]$.
Integrating the first two balance equations on $\mathbb{R}$, we confirm
conservation of $H_{-1}$ and $H_{0}$. To prove conservation of $H_{1}$,
we need to show that $uu_{xx}\to0$ as $|x|\to\infty$ for any $t\in[0,T]$.
Using (\ref{short-pulse}) and (\ref{transformation}), we obtain
\[
\frac{1}{2}uu_{xx}-u_{x}^{2}=\frac{u_{xt}}{u}-1=\tan^{2}(w)=\frac{q^{2}}{1-q^{2}},\]
 where $u_{x}\to0$ as $|x|\to\infty$ and $q=q(y,t)$. By Theorem
\ref{theorem-joint}, $q(t)\in C^{1}([0,T],H^{1})$ and $\|q(t)\|_{L^{\infty}}\leq q_{c}<1$
for any $t\in[0,T]$. Therefore, \[
\frac{\partial x}{\partial y}=\cos(w)=\sqrt{1-q^{2}}\geq\sqrt{1-q_{c}^{2}}>0,\]
 for any $t\in[0,T]$, so that the limits $y\to\pm\infty$ correspond
to the limits $x\to\pm\infty$. Furthermore, since $q\to0$ as $|y|\to\infty$,
we have $uu_{xx}\to0$ as $|x|\to\infty$. \end{proof}

We can now prove Theorem \ref{theorem-wellposedness}.

\begin{proof1}{\em of Theorem \ref{theorem-wellposedness}.} The
values of $H_{-1}$, $H_{0}$ and $H_{1}$ computed at initial data
$u_{0}\in H^{2}$ are bounded by \begin{eqnarray*}
H_{-1} & = & \int_{\mathbb{R}}u^{2}dx\leq\|u_{0}\|_{H^{2}}^{2},\\
H_{0} & = & \int_{\mathbb{R}}\frac{u_{x}^{2}}{1+\sqrt{1+u_{x}^{2}}}dx\leq\frac{1}{2}\|u_{0}\|_{H^{2}}^{2},\\
H_{1} & = & \int_{\mathbb{R}}\frac{u_{xx}^{2}}{(1+u_{x}^{2})^{5/2}}dx\leq\|u_{0}\|_{H^{2}}^{2}.\end{eqnarray*}
 Note that if $\|u_{0}'\|_{L^{2}}^{2}+\|u_{0}''\|_{L^{2}}^{2}<1$,
then $2H_{0}+H_{1}<1$. By Lemma \ref{lemma-conserved-quantities-short-pulse},
these quantities remain constant on $[0,T]$. We will show that the
quantities $H_{0}$ and $H_{1}$ give an upper bound for $H^{1}$
norm of the variable \begin{equation}
\tilde{q}=\frac{u_{x}}{\sqrt{1+u_{x}^{2}}}.\label{map-q-ux}\end{equation}
 Note that $\tilde{q}(x,t)=q(y,t)=\sin(w(y,t))$, where $x=x(y,t)$
is defined by the transformation (\ref{transformation}). To control
$\|\tilde{q}\|_{H^{1}}$, we obtain \begin{eqnarray*}
\int_{\mathbb{R}}\tilde{q}^{2}dx & = & \int_{\mathbb{R}}\frac{u_{x}^{2}}{1+u_{x}^{2}}dx=\int_{\mathbb{R}}\frac{u_{x}^{2}}{1+\sqrt{1+u_{x}^{2}}}\frac{1+\sqrt{1+u_{x}^{2}}}{1+u_{x}^{2}}dx\\
 & \leq & 2\int_{\mathbb{R}}\frac{u_{x}^{2}}{1+\sqrt{1+u_{x}^{2}}}dx=2H_{0}\end{eqnarray*}
 and \begin{eqnarray*}
\int_{\mathbb{R}}\tilde{q}_{x}^{2}dx & = & \int_{\mathbb{R}}\left[\partial_{x}\left(\frac{u_{x}}{\sqrt{1+u_{x}^{2}}}\right)\right]^{2}dx\\
 & \leq & \int_{\mathbb{R}}\sqrt{1+u_{x}^{2}}\left[\partial_{x}\left(\frac{u_{x}}{\sqrt{1+u_{x}^{2}}}\right)\right]^{2}dx=H_{1}.\end{eqnarray*}
 If $u(t)\in C([0,T],H^{2})$, then $q(t)\in C([0,T],H^{1})$ and
$\tilde{q}(t)$ satisfies the $T$-independent bound \[
\|\tilde{q}(t)\|_{H^{1}}\leq\sqrt{H_{1}+2H_{0}}<1,\quad\forall t\in[0,T].\]
Thanks to Sobolev's embedding $\|\tilde{q}\|_{L^{\infty}}\leq\frac{1}{\sqrt{2}}\|\tilde{q}\|_{H^{1}}$,
we have $\|\tilde{q}(t)\|_{L^{\infty}}\leq q_{c}:=\frac{1}{\sqrt{2}}\sqrt{H_{1}+2H_{0}}<1$,
$\forall t\in[0,T]$. Inverting the map (\ref{map-q-ux}), we obtain
\[
u_{x}=\frac{\tilde{q}}{\sqrt{1-\tilde{q}^{2}}}.\]
 Since $H^{1}$ is a Banach algebra with $C_{1}=1$ in the bound (\ref{banach-algebra})
(see, e.g., \cite{Morosi}), we expand the map into Taylor series
with positive coefficients to obtain \[
\|u_{x}\|_{H^{1}}\leq\frac{\|\tilde{q}\|_{H^{1}}}{\sqrt{1-\|\tilde{q}\|_{H^{1}}^{2}}},\]
 which results in the $T$-independent bound \[
\|u(T)\|_{H^{2}}\leq\left(H_{-1}+\frac{H_{1}+2H_{0}}{1-(H_{1}+2H_{0})}\right)^{1/2}.\]
 This bound allows us to choose a constant time step $T_{0}$ such
that the solution $u(T_{0})$ can be continued on the interval $[T_{0},2T_{0}]$
in space $C([T_{0},2T_{0}],H^{2})$ using the same Theorems \ref{theorem-wayne}
and \ref{theorem-joint}. Continuing the solution with a uniform time
step $T_{0}>0$, we obtain global existence of solutions in space
$u(t)\in C(\mathbb{R}_{+},H^{2})$, which completes the proof of Theorem
\ref{theorem-wellposedness}. \end{proof1}

The sufficient condition for global well-posedness of Theorem \ref{theorem-wellposedness}
can be sharpen thanks to the scaling invariance of the short-pulse
equation (\ref{short-pulse}). Let $\alpha\in\mathbb{R}_{+}$ be an
arbitrary parameter. If $u(x,t)$ is a solution of (\ref{short-pulse}),
then $U(X,T)$ is also a solution of (\ref{short-pulse}) with \begin{equation}
X=\alpha x,\quad T=\alpha^{-1}t,\quad U(X,T)=\alpha u(x,t).\label{scaling-invariance}\end{equation}
 The scaling invariance yields the following transformation for conserved
quantities \begin{eqnarray*}
\tilde{H}_{0} & = & \int_{\mathbb{R}}\left(\sqrt{1+U_{X}^{2}}-1\right)dX=\alpha\int_{\mathbb{R}}\left(\sqrt{1+u_{x}^{2}}-1\right)dx=\alpha H_{0},\\
\tilde{H}_{1} & = & \int_{\mathbb{R}}\frac{U_{XX}^{2}}{(1+U_{X}^{2})^{5/2}}dX=\alpha^{-1}\int_{\mathbb{R}}\frac{u_{xx}^{2}}{(1+u_{x}^{2})^{5/2}}dx=\alpha^{-1}H_{1}.\end{eqnarray*}
 Therefore, for a given $u_{0}\in H^{2}$, we obtain a family of initial
data $U_{0}\in H^{2}$ satisfying \[
\phi(\alpha)=2\tilde{H}_{0}+\tilde{H}_{1}=2\alpha H_{0}+\alpha^{-1}H_{1}.\]
 Function $\phi(\alpha)$ achieves its minimum of $2\sqrt{2H_{0}H_{1}}$
at $\alpha=\sqrt{\frac{H_{2}}{2H_{1}}}$, so that \[
\forall\alpha\in\mathbb{R}_{+}:\quad\phi(\alpha)\geq2\sqrt{2H_{0}H_{1}}.\]
 Using the scaling invariance property, we obtain the following corollary
to Theorem \ref{theorem-wellposedness}.

\begin{corollary} Assume that $u_{0}\in H^{2}$ and $2\sqrt{2H_{0}H_{1}}<1$.
Then the short-pulse equation (\ref{short-pulse}) admits a unique
solution $u(t)\in C(\mathbb{R}_{+},H^{2})$ satisfying $u(0)=u_{0}$.
\end{corollary}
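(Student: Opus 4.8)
The plan is to use the scaling invariance (\ref{scaling-invariance}) to reduce the hypothesis $2\sqrt{2H_0H_1}<1$ to the smallness condition already handled in Theorem \ref{theorem-wellposedness}. The key point is that the combination $2H_0+H_1$, which drives the proof of Theorem \ref{theorem-wellposedness}, is not scale invariant: under (\ref{scaling-invariance}) it becomes $\phi(\alpha)=2\alpha H_0+\alpha^{-1}H_1$, whose scale-optimized value is precisely $\min_{\alpha>0}\phi(\alpha)=2\sqrt{2H_0H_1}$. Thus the assumption $2\sqrt{2H_0H_1}<1$ says exactly that some rescaling of $u_0$ meets the smallness threshold.

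First I would record that the proof of Theorem \ref{theorem-wellposedness} never uses the full hypothesis $\|u_0'\|_{L^2}^2+\|u_0''\|_{L^2}^2<1$, but only its consequence $2H_0+H_1<1$: the stated hypothesis enters solely through the bounds $2H_0+H_1\leq\|u_0'\|_{L^2}^2+\|u_0''\|_{L^2}^2$, after which the entire continuation argument rests on the $T$-independent estimate $\|\tilde q(t)\|_{H^1}^2\leq H_1+2H_0<1$. Hence Theorem \ref{theorem-wellposedness} in fact yields a global solution $u(t)\in C(\mathbb{R}_+,H^2)$ for every $u_0\in H^2$ satisfying the weaker, scale-dependent condition $2H_0+H_1<1$.

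Next I would perform the optimal rescaling. If $H_0=0$ or $H_1=0$ then $u_0'\equiv0$ and hence $u_0\equiv0$, so the conclusion is trivial; assume therefore $H_0,H_1>0$. Choosing $\alpha^{\ast}=\sqrt{H_1/(2H_0)}$, which attains the minimum of $\phi$, the rescaled datum $U_0(X)=\alpha^{\ast}u_0(X/\alpha^{\ast})\in H^2$ has conserved quantities $\tilde H_0=\alpha^{\ast}H_0$ and $\tilde H_1=(\alpha^{\ast})^{-1}H_1$ satisfying $2\tilde H_0+\tilde H_1=\phi(\alpha^{\ast})=2\sqrt{2H_0H_1}<1$. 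By the strengthened form of Theorem \ref{theorem-wellposedness} from the previous step, there is a unique global solution $U(T)\in C(\mathbb{R}_+,H^2)$ of (\ref{short-pulse}) with $U(0)=U_0$.

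Finally I would undo the scaling. Setting $u(x,t)=(\alpha^{\ast})^{-1}U(\alpha^{\ast}x,(\alpha^{\ast})^{-1}t)$, the invariance (\ref{scaling-invariance}) shows that $u$ solves (\ref{short-pulse}) with $u(0)=u_0$; since $\alpha^{\ast}>0$ the map $t\mapsto(\alpha^{\ast})^{-1}t$ is a bijection of $\mathbb{R}_+$, so $u(t)\in C(\mathbb{R}_+,H^2)$, and uniqueness is inherited from that of $U$. The only genuine obstacle is the first step: one must verify that the argument of Theorem \ref{theorem-wellposedness} really depends on the initial data only through $2H_0+H_1$ and is otherwise scale covariant; the subsequent optimization and change of variables are routine.
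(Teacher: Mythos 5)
Your proof is correct and follows essentially the same route as the paper: rescale via (\ref{scaling-invariance}) so that $2\tilde H_0+\tilde H_1=\phi(\alpha^\ast)=2\sqrt{2H_0H_1}<1$ and then invoke Theorem \ref{theorem-wellposedness}. Your first step --- checking explicitly that the proof of Theorem \ref{theorem-wellposedness} uses the hypothesis $\|u_0'\|_{L^2}^2+\|u_0''\|_{L^2}^2<1$ only through its consequence $2H_0+H_1<1$ --- is actually needed for the argument to be rigorous (the rescaled datum $U_0$ need not satisfy the literal hypothesis of the theorem), and the paper leaves this point implicit.
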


\begin{proof} If $u_{0}\in H^{2}$ satisfies $2\sqrt{2H_{0}H_{1}}<1$,
there exists $\alpha\in\mathbb{R}_{+}$ such that the corresponding
$U_{0}\in H^{2}$ satisfies $2\tilde{H}_{0}+\tilde{H}_{1}<1$. By
Theorem \ref{theorem-wellposedness}, the corresponding solution $U(T)\in C(\mathbb{R}_{+},H^{2})$,
so that $u(t)\in C(\mathbb{R}_{+},H^{2})$ by the scaling transformation
(\ref{scaling-invariance}). \end{proof}

\section{Global well-posedness of the sine--Gordon equation (\ref{sine-Gordon})}

The sine--Gordon equation (\ref{sine-Gordon}) has an infinite set
of conserved quantities similarly to the short-pulse equation (\ref{short-pulse}).
These conserved quantities can be enumerated by the order $j\geq0$
in the term $(\partial_{y}^{j}w)^{2}$ involving the highest spatial
derivative. We will use only the first two conserved quantities, \[
E_{0}=\int_{\mathbb{R}}(1-\cos(w))dy,\quad E_{1}=\int_{\mathbb{R}}w_{y}^{2}dy,\]
 existence of which follow formally from the balance equations \begin{eqnarray*}
\partial_{t}\left(1-\cos(w)\right)=\partial_{y}\left(\frac{1}{2}w_{t}^{2}\right),\quad\partial_{t}\left(\frac{1}{2}w_{y}^{2}\right)=\partial_{y}\left(1-\cos(w)\right).\end{eqnarray*}
 Additionally, the sine--Gordon equation (\ref{sine-Gordon}) has
another infinite set of conserved quantities involving trigonometric
functions of $w$ and their integrals enumerated by $j\leq0$ in the
term $(\partial_{y}^{j}w)^{2}$. Besides $E_{0}$, we need only one
conserved quantity of this set, \[
E_{-1}=\int_{\mathbb{R}}\cos(w)w_{t}^{2}dy,\]
 existence of which follows formally from the balance equation \[
\partial_{t}\left(\cos(w)w_{t}^{2}\right)=\partial_{y}\left(w_{tt}^{2}-\frac{1}{4}w_{t}^{4}\right).\]
 Using the transformation $q=\sin(w)$, we rewrite the conserved quantities
in the equivalent form \begin{equation}
E_{-1}=\int_{\mathbb{R}}\sqrt{1-q^{2}}p^{2}dy,\quad E_{0}=\int_{\mathbb{R}}f(q)dy,\quad E_{1}=\int_{\mathbb{R}}\frac{q_{y}^{2}}{1-q^{2}}dy,\label{conserved-quantities}\end{equation}
 where $p=\partial_{y}^{-1}q$ and $f(q)$ is defined by (\ref{nonlinear-function}).
The balance equations are rewritten in the corresponding forms \[
\partial_{t}f(q)=\partial_{y}\left(\frac{1}{2}p^{2}\right),\quad\partial_{t}\left(\frac{q_{y}^{2}}{1-q^{2}}\right)=\partial_{y}f(q)\]
 and \[
\partial_{t}\left(\sqrt{1-q^{2}}p^{2}\right)=\partial_{y}\left(p_{t}^{2}-\frac{1}{4}p^{4}\right).\]
 We shall check if $E_{1}$, $E_{0}$, and $E_{-1}$ are time conserved
quantities for the Cauchy problem (\ref{Cauchy}). Global well-posedness
in $H^{2}$ follows from analysis of the three conserved quantities.

\begin{lemma} \label{lemma-conserved-quantities} Let $p(t)\in C^{1}([0,T],H^{2})$
be the solution of the Cauchy problem (\ref{Cauchy}) and $q(t)=\partial_{y}p(t)$.
Then, $E_{1}$, $E_{0}$, and $E_{-1}$ are constant on $[0,T]$.
\end{lemma}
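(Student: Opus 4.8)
The plan is to verify conservation of each quantity $E_1$, $E_0$, $E_{-1}$ by integrating the corresponding balance equation over $\mathbb{R}$ and showing that the spatial flux term has vanishing limits as $|y|\to\infty$. The regularity hypothesis $p(t)\in C^1([0,T],H^2)$ is exactly what should make all the manipulations rigorous: it gives $q=p_y\in C^1([0,T],H^1)$, hence $q_y\in C([0,T],L^2)$, and by Sobolev embedding $p,q,q_y$ together with $p_t,q_t$ all lie in $L^\infty$ and decay to $0$ as $|y|\to\infty$ for each fixed $t$. Moreover the constraint $\|q(t)\|_{L^\infty}\le q_c<1$ keeps $\sqrt{1-q^2}$ bounded away from zero, so none of the denominators $1-q^2$ or $\sqrt{1-q^2}$ appearing in the densities or fluxes blows up.

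First I would treat $E_0=\int_{\mathbb{R}}f(q)\,dy$, which is the simplest. Differentiating under the integral sign (justified by the $C^1$-in-time regularity and dominated convergence) and using $q_t=\sqrt{1-q^2}\,p$ from the Cauchy problem, one checks directly that $\partial_t f(q)=f'(q)q_t = \frac{q}{\sqrt{1-q^2}}\cdot\sqrt{1-q^2}\,p = qp = p_y p = \partial_y(\tfrac12 p^2)$, matching the stated balance law. Integrating over $\mathbb{R}$ gives $\frac{d}{dt}E_0 = [\tfrac12 p^2]_{-\infty}^{\infty}=0$ because $p\in H^2\hookrightarrow L^\infty$ decays at infinity. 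Next I would handle $E_1=\int_{\mathbb{R}}\frac{q_y^2}{1-q^2}\,dy$ via the balance equation $\partial_t\bigl(\frac{q_y^2}{1-q^2}\bigr)=\partial_y f(q)$; here one computes the time derivative of the density using $q_t=\sqrt{1-q^2}\,p$ and its $y$-derivative, then recognizes the result as $\partial_y f(q)=f'(q)q_y$, and integrates to get $\frac{d}{dt}E_1=[f(q)]_{-\infty}^{\infty}=0$ since $f(q)\sim q^2/2$ decays with $q$.

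The main obstacle, and the step I would spend the most care on, is $E_{-1}=\int_{\mathbb{R}}\sqrt{1-q^2}\,p^2\,dy$ with flux $p_t^2-\tfrac14 p^4$. This is the quantity involving the antiderivative $p=\partial_y^{-1}q$ rather than $q$ itself, so the decay of the flux at infinity is the delicate point: I must confirm $p_t\to 0$ as $|y|\to\infty$. Using $p_t = Lq_t$ together with $q_t=\sqrt{1-q^2}\,p$ and the integral representation from \eqref{formula-P}, one sees $p_t$ is an antiderivative-type object, and I would verify its decay by noting $p_t$ lies in $H^2$ (from $q_t\in H^1$ and the mapping properties established for $\partial_y^{-1}$ acting under the constraint, via Theorem \ref{theorem-joint} identifying $p_t$ with a controlled $H^2$ quantity). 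Once the decay of $p_t$ and $p^4$ is secured, differentiating the density gives $\partial_t(\sqrt{1-q^2}\,p^2) = -\frac{q q_t}{\sqrt{1-q^2}}p^2 + 2\sqrt{1-q^2}\,p\,p_t$, and a careful algebraic reduction using $q_t=\sqrt{1-q^2}\,p$ and the evolution equation for $p_t$ should collapse this to $\partial_y(p_t^2-\tfrac14 p^4)$; integrating then yields $\frac{d}{dt}E_{-1}=0$. The crux throughout is that the enhanced regularity $p\in C^1([0,T],H^2)$ supplied by Theorem \ref{theorem-joint}, rather than the bare local-existence space, is what licenses both the differentiation under the integral and the vanishing of the boundary flux terms.
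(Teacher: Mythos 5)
Your proposal is correct and follows essentially the same route as the paper: integrate the three balance equations over $\mathbb{R}$ and use the regularity $p(t)\in C^{1}([0,T],H^{2})$ together with Sobolev embedding to conclude that the flux terms (built from $p$, $p_t$, and $f(q)$) vanish as $|y|\to\infty$. The only quibble is your passing claim that $q_y$ decays at infinity, which does not follow from $q\in H^{1}$ --- but it is also not needed, since $q_y$ never appears in any flux.
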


\begin{proof} By Sobolev's embedding for $p(t)\in C^{1}([0,T],H^{2})$,
we have \textbf{$q(t)$, $p(t)$,} $p_{t}(t)\to0$ as $|y|\to\infty$.
Therefore, conservation of $E_{1}$, $E_{0}$, and $E_{-1}$ follows
by integrating the balance equations on $y\in\mathbb{R}$ for the
local solutions. \end{proof}

\begin{theorem} \label{theorem-global-wellposedness} Assume that
$q_{0}\in X_{c}^{1}$ and $2E_{0}+E_{1}<1$. Then there exist a unique
global solution $q(t)\in C(\mathbb{R}_{+},X_{c}^{1})$ of the Cauchy
problem (\ref{Cauchy}) satisfying $q(0)=q_{0}$. \end{theorem}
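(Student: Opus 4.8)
The plan is to run exactly the continuation scheme used in the proof of Theorem \ref{theorem-wellposedness}. Since the local existence time produced by Theorem \ref{theorem-local-wellposedness} is controlled from below by the size of the data in $X^1$ and by the gap $1-\|q_0\|_{L^\infty}$, it suffices to establish a $T$-independent a priori bound on $\|q(t)\|_{X^1}$ together with a uniform bound $\|q(t)\|_{L^\infty}\le q_c<1$. Granted these, one chooses a fixed time step $T_0>0$, extends the solution from $[0,T_0]$ to $[T_0,2T_0]$ and so on, and obtains $q(t)\in C(\mathbb{R}_+,X_c^1)$; uniqueness on $\mathbb{R}_+$ then follows from local uniqueness in Theorem \ref{theorem-local-wellposedness} applied on each subinterval.

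All three bounds I would extract from the conserved quantities (\ref{conserved-quantities}). Using $f(q)\ge\tfrac12 q^2$ from the squeezing of (\ref{nonlinear-function}) and $\tfrac{1}{1-q^2}\ge 1$, I estimate $\|q\|_{L^2}^2\le 2E_0$ and $\|q_y\|_{L^2}^2\le E_1$, whence $\|q(t)\|_{H^1}^2\le 2E_0+E_1$. As $E_0,E_1$ are conserved and $2E_0+E_1<1$ by hypothesis, the Sobolev inequality $\|q\|_{L^\infty}\le\tfrac1{\sqrt2}\|q\|_{H^1}$ yields the uniform gap $\|q(t)\|_{L^\infty}\le q_c:=\tfrac1{\sqrt2}\sqrt{2E_0+E_1}<1$. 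With this $q_c$, the positivity $\sqrt{1-q^2}\ge\sqrt{1-q_c^2}>0$ converts $E_{-1}$ into $\|p(t)\|_{L^2}^2\le E_{-1}/\sqrt{1-q_c^2}$, so that $\|q(t)\|_{X^1}=\|q(t)\|_{H^1}+\|p(t)\|_{L^2}$ is bounded by a $T$-independent constant. All three quantities are finite at $t=0$, since $E_0(q_0)\le\|q_0\|_{L^2}^2$, $E_{-1}(q_0)\le\|p_0\|_{L^2}^2$, and $E_1(q_0)<\infty$ because $1-q_0^2\ge 1-q_c^2>0$.

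The main obstacle is to make the use of the conserved quantities legitimate, i.e. to be entitled to invoke Lemma \ref{lemma-conserved-quantities}, whose hypothesis is $p(t)\in C^1([0,T],H^2)$. The local theory for (\ref{Cauchy}) gives only $q(t)\in C([0,T],X_c^1)\cap C^1([0,T],H^1)$ via Theorem \ref{theorem-local-wellposedness} and Corollary \ref{corollary-C1}, which does not by itself place the second $y$-derivative of $p$ in $C^1$. I would recover this extra regularity through the duality of Theorem \ref{theorem-joint}: the datum $q_0\in X_c^1$ gives $p_0=\partial_y^{-1}q_0\in H^2$, so $u_0:=p_0\in H^2$ generates a short-pulse solution by Theorem \ref{theorem-wayne}, and Theorem \ref{theorem-joint} then upgrades the sine--Gordon variable to $p(t)\in C^1([0,T],H^2)$ on the common existence interval. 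This justifies the balance-law integrations of Lemma \ref{lemma-conserved-quantities} and hence the conservation of $E_{-1}$, $E_0$, $E_1$ needed above.

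Finally, I would close the continuation by a standard bootstrap. On each local interval $q(t)\in X_c^1$, so the squeezing $f(q)\ge\tfrac12 q^2$ and the positivity $\sqrt{1-q^2}\ge\sqrt{1-q_c^2}$ are valid and the conserved-quantity estimates apply; they keep $\|q(t)\|_{L^\infty}\le q_c$ strictly below $1$ and $\|q(t)\|_{X^1}\le M$ for a $T$-independent constant $M$. Feeding these uniform bounds into Theorem \ref{theorem-local-wellposedness}---choosing its free parameters with a threshold in $(q_c,1)$ and with $\alpha\in(0,1)$, $\delta$ arranged so that the hypotheses $\|q(nT_0)\|_{X^1}\le\alpha\delta$ and $\|q(nT_0)\|_{L^\infty}\le\alpha q_c$ hold at every restart---produces a lower bound $T_0>0$ on the existence time independent of the restart time, and the a priori gap prevents $\|q(t)\|_{L^\infty}$ from reaching $1$ so the solution never leaves $X_c^1$. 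Iterating with the uniform step $T_0$ over $[nT_0,(n+1)T_0]$, $n\ge 0$, yields the unique global solution $q(t)\in C(\mathbb{R}_+,X_c^1)$, completing the proof.
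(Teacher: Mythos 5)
Your proposal is correct and follows essentially the same route as the paper: the same lower bounds $E_0\ge\tfrac12\|q\|_{L^2}^2$, $E_1\ge\|q_y\|_{L^2}^2$, $E_{-1}\ge\sqrt{1-q_c^2}\,\|p\|_{L^2}^2$, the same Sobolev step giving $\|q(t)\|_{L^\infty}\le\tfrac1{\sqrt2}\sqrt{2E_0+E_1}<1$, the same appeal to Theorem \ref{theorem-joint} to secure $p(t)\in C^1([0,T],H^2)$ so that Lemma \ref{lemma-conserved-quantities} applies, and the same uniform-time-step continuation. Your explicit justification of the extra regularity needed for the conservation laws is in fact slightly more detailed than the paper's one-line citation of Theorems \ref{theorem-local-wellposedness} and \ref{theorem-joint}.
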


\begin{proof} The values of $E_{-1},E_{0},E_{1}$ computed at initial
data $q_{0}\in X_{c}^{1}$ are bounded by \[
E_{-1}\leq\|p_{0}\|_{L^{2}}^{2},\quad|E_{0}|\leq\|q_{0}\|_{L^{2}}^{2},\quad E_{1}\leq\frac{1}{1-\|q_{0}\|_{L^{\infty}}^{2}}\|q_{0}'\|_{L^{2}}^{2},\]
 where the constraint $\|q_{0}\|_{L^{\infty}}\leq q_{c}<1$ is used.
By Lemma \ref{lemma-conserved-quantities}, if $q(t)\in C^{1}([0,T],X_{c}^{1})$
is a solution constructed in Theorems \ref{theorem-local-wellposedness}
and \ref{theorem-joint} for a fixed $T>0$, the values of quantities
$E_{-1},E_{0},E_{1}$ are constant on $[0,T]$. Therefore, we only
need to bound the norm $\|q(t)\|_{X^{1}}=\|q(t)\|_{H^{1}}+\|p(t)\|_{L^{2}}$
by a combination of $E_{-1},E_{0},E_{1}$. This bound is obtained
from the following estimates \[
E_{-1}\geq\|p(t)\|_{L^{2}}^{2}\sqrt{1-\|q(t)\|_{L^{\infty}}^{2}},\quad E_{0}\geq\frac{1}{2}\|q(t)\|_{L^{2}}^{2},\quad E_{1}\geq\|\partial_{y}q(t)\|_{L^{2}}^{2},\quad\forall t\in[0,T].\]
 By Sobolev's embedding and the bounds above, we have \[
\|q(t)\|_{L^{\infty}}\leq\frac{1}{\sqrt{2}}\|q(t)\|_{H^{1}}\leq q_{c}:=\frac{1}{\sqrt{2}}\sqrt{E_{1}+2E_{0}}<1,\]
 since $E_{1}+2E_{0}<1$ thanks to the assumption of the theorem.
As a result, we obtain the bound \[
\|q(t)\|_{X^{1}}\leq\sqrt{E_{1}+2E_{0}}+\sqrt{\frac{E_{-1}}{\sqrt{1-q_{c}^{2}}}},\quad\forall t\in[0,T].\]
 The time step $T>0$ depends on $\|q_{0}\|_{X^{1}}$. Since the above
norm is bounded by the $T$-independent constant on $[0,T]$, one
can choose a non-zero time step $T_{0}>0$ such that the solution
$q(t)$ can be continued on the interval $[T_{0},2T_{0}]$ using the
same Theorems \ref{theorem-local-wellposedness} and \ref{theorem-joint}.
Continuing the solution with a uniform time step $T_{0}$, we obtain
global existence of solutions $q(t)\in C(\mathbb{R}_{+},X_{c}^{1})$.
\end{proof}

\begin{remark} Theorem \ref{theorem-global-wellposedness} is very
similar to Theorem \ref{theorem-wellposedness} thanks to correspondence
between the two equations in Lemma \ref{lemma-correspondence}. In
particular, it follows directly that $H_{1}=E_{1}$, $H_{0}=E_{0}$
and $H_{-1}=E_{-1}$. \end{remark}

\end{document}